\newtheorem{theorem}{Theorem}[section]
\newtheorem{prop}[theorem]{Proposition}
\newtheorem{remark}[theorem]{Remark}
\newtheorem{lemma}[theorem]{Lemma}
\newtheorem{conjecture}[theorem]{Conjecture}
\def\a{{\alpha}}%\a == \alpha
\def\b{{\beta}}%\b == \beta
\def\G{\Gamma}%\G == \Gamma
\def\l{\lambda}%\l == \lambda
\def\v{\varpi}%\v == \varpi
\def\ep{\varepsilon}%\ep == \varepsilon
\def\vp{\varphi}%\vp == \varphi
\def\th{\theta}%\th == \theta
\def\Th{\Theta}%\Th == \Theta
\def\R{{\mathbb{R}}}%\R == \mathbb{R}
\def\Q{{\mathbb{Q}}}%\Q == \mathbf{Q}
\def\C{{\mathbb{C}}}%\C == \mathbf{C}
\def\Z{{\mathbb{Z}}}%\Z == \mathbf{Z}
\def\D{{\rm D}}%\D == {\rm D}
\def\A{{\mathbb{A}}}%\A == \mathbb{A}
\def\O{{\mathcal{O}}}%\O == \mathcal{O}
\def\K{{\rm K}}%\K == {\rm K}
\def\B{{\rm B}}%\B == {\rm B}
\def\cA{{\mathcal{A}}}%\cA == \mathcal{A}
\def\Sc{{\mathcal{S}}}%\Sc == \mathcal{S}
\def\W{{\mathcal{W}}}%\W == \mathcal{W}
\def\o{\mathfrak{o}}%\n == \mathfrak{n}
\def\p{{\mathfrak{p}}}%\p == \mathfrak{p}
\def\i{\infty}%\i == \infty
\def\ol{\overline}%\ol == \overline
\def\t{\times}%\t == \times
\def\ot{\otimes}%\ot == \otimes
\def\bt{\boxtimes}%\bt == \boxtimes
\def\bs{\backslash}%\bs == \backslash
\def\GL{{{\rm GL}}}%\GL == {\rm GL}
\def\PGL{{{\rm PGL}}}%\PGL == {\rm PGL}
\def\SL{{{\rm SL}}}%\SL == {\rm SL}
\def\GSp{{{\rm GSp}}}%\GSp == {\rm GSp}
\def\Sp{{{\rm Sp}}}%\Sp == {\rm Sp}
\def\GSO{{{\rm GSO}}}%\GSO == {\rm GSO}
\def\GO{{{\rm GO}}}%\GO == {\rm GO}
\def\O{{{\rm O}}}%\O == {\rm O}
\theoremstyle{definition}
\numberwithin{equation}{section}
\newtheorem{thm}{Theorem}[section]
\newtheorem{Rem}[thm]{Remark}
\newcommand{\bQ}{\overline{\mathbb{Q}}}
\newcommand{\ds}{\displaystyle}
\newcommand{\lra}{\longrightarrow}
\begin{document}
\title{Endoscopic lifts to the Siegel 
modular threefold related to Klein's cubic threefold}
\author{Takeo Okazaki}
\email{okazaki@cc.nara-wu.ac.jp}
\address{Department of Mathematics, Faculty of Science, Nara Woman University,
Kitauoyahigashi-machi, Nara 630-8506, Japan.}
\author{Takuya Yamauchi}
\email{yamauchi@las.osakafu-u.ac.jp}
\address{Faculty of Liberal Arts and Sciences, Osaka Prefecture University
1-1 Gakuen-cho, Nakaku, Sakai, Osaka 599-8531, Japan.}
\dedication{Dedicated to Professor Takayuki Oda 
on his 60-th birthday.}
\classification{11F46 (primary), 11G40 (secondary). }
\keywords{Endoscopic lift, Paramodular group, Klein's cubic threefold.}
\thanks{
The first author would like to express his appreciation to Professor Ralf Schmidt for his warmhearted explanation on his results and to Professor Tomonori Moriyama for his kind various advice.
The second author would like to express his appreciation to 
Professor Klaus Hulek and Takayuki Oda for valuable comments. 
They would like to thank the referees for many valuable comments, in particular suggesting us take Arthur's 
conjecture into acount. 
The first author is supported by
the Japan Society for the Promotion of Science Research 
Fellowships for Young Scientists and the second author is partially supported by JSPS Grant-in-Aid for Scientific Research No.19740017 
and JSPS Core-to-Core Program No.18005.}

\maketitle

{\footnotesize{\bf Abstract.-} 
Let $\mathcal{A}^{{\rm lev}}_{11}$ be 
the moduli space of $(1,11)$-polarized abelian surfaces 
with a canonical level structure. Let $\chi$ be a primitive character 
of order 5 with conductor 11. 
In this paper we construct five endoscopic lifts 
$\Pi_i,0\le i\le 4$ from two elliptic modular forms 
$f\otimes\chi^i$ of weight 2 and 
$g\otimes\chi^i$ of weight 4 with complex multiplication by $\Q(\sqrt{-11})$ 
such that 
${\Pi_i}_\infty$ gives a non-holomorphic 
differential form on $\mathcal{A}^{{\rm lev}}_{11}$ 
for each $i$, 
$0\le i\le 4$.  
Then their spinor $L$-functions 
are of form $L(s-1,f\otimes\chi^i)L(s,g\otimes\chi^i)$ such that  
$L(s,g\otimes\chi^i)$ does not appear in the $L$-function of $\mathcal{A}^{{\rm lev}}_{11}$ for any $i$, 
$0\le i\le 4$. 
The existence of such lifts is motivated by the computation of the $L$-function of Klein's cubic threefold which is a birational smooth model of $\mathcal{A}^{{\rm lev}}_{11}$.  
}

\section{Introduction}
Let $\mathcal{A}^{{\rm lev}}_{11}$ be 
the moduli space of $(1,11)$-polarized abelian surfaces 
with a canonical level structure (see \cite{hkw} 
for details). It is obtained as a quotient of the Siegel upper half plane $\mathbb{H}_2$ 
by the arithmetic subgroup $\K(11)^{{\rm lev}}$ 
in the symplectic group $\Sp_2(\Z) \subset \GL_4(\Z)$ 
(see Section 3 for the definition of  $\K(11)^{{\rm lev}}$). 
The congruence subgroup 
$$\Gamma(11):=\{\gamma\in \Sp_2(\Z)\ |\ \gamma\equiv 1_4\ {\rm mod}\ 11 \}\subset \Sp_2(\Z)$$ 
is a normal subgroup of $g\K(11)^{{\rm lev}}g^{-1}$ where 
$g={\rm diag}(1,1,11,11)$. 
Then we have the Galois covering 
$\pi:S_{\Gamma(11)}:=\Gamma(11)\backslash\mathbb{H}
\lra \mathcal{A}^{{\rm lev}}_{11}$ of moduli spaces with 
the Galois group $G:=g\K(11)^{{\rm lev}}g^{-1}/
\Gamma(11)$. Since $\Gamma(11)$ 
is torsion free, $S_{\Gamma(11)}$ is a quasi-projective smooth variety. 
On the other hand,  $\K(11)^{{\rm lev}}$ has many torsion points, so 
$\mathcal{A}^{{\rm lev}}_{11}$ is a quasi-projective variety but it 
has many quotient singularities. 
We can view these moduli spaces as varieties defined over $\Q$ if we 
consider the level structure \'etale locally (cf. \cite{mumford}). Note that it is easy to see that canonical models of these varieties are both 
defined over $\Q(\zeta_{11^2})$. This fact follows from the moduli interpretation and a basic knowledge of 
Weil pairing.    
In \cite{g&p}, 
Mark Gross and Popescu determined 
a birational smooth model $X$ of $\mathcal{A}^{{\rm lev}}_{11}$ 
by a significant method. 
More precisely they constructed a birational map 
$\mathcal{A}^{{\rm lev}}_{11}\longrightarrow X$ over $\Q$ where 
$X$ is defined by a simple equation in projective space 
$\mathbb{P}^4_\Q$: 
$$X:x_0^2x_1+x^2_1x_2+x^2_2x_3+x^2_3x_4+x^2_4x_0=0.$$  
The variety $X$ is called Klein's cubic threefold. 
Since $X$ is a smooth cubic threefold (hence a Fano threefold), 
it has Hodge numbers 
\begin{eqnarray}
h^{0,0}=1, 
h^{1,0}=h^{0,1}=h^{2,0}=h^{0,2}=h^{3,0}=h^{0,3}=0, h^{1,1}=1,
h^{2,1}=h^{1,2}=5. \label{eqn:hstK}
\end{eqnarray}
In particular, we see that the local $L$-factor of $X$ is 
of degree 10. 

In this paper we first compute the $L$-function of $X$. 
As a result we have:
\begin{theorem} Let $\ell$ be a prime number. 
Let $f$ be the elliptic modular form of weight 2 for 
$\Gamma_0(11^2)$ with 
complex multiplication by the ring of integers of 
$\Q(\sqrt{-11})$. Let $\chi$ be  
a primitive character of order 5 with conductor 11. 
Then we have $L(s,H^3_{{\rm et}}(X_{\bQ},\Q_\ell))=\displaystyle\prod_{i=0}^4
L(s-1,f \ot \chi^i)$. In particular, the left hand side is 
independent to any choice of $\ell$. 
\end{theorem}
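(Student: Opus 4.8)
The plan is to compute $H^3_{\mathrm{et}}(X_{\bQ},\Q_\ell)$ by exploiting the Galois covering $\pi\colon S_{\Gamma(11)}\to\mathcal{A}^{\mathrm{lev}}_{11}$ together with the birational equivalence $\mathcal{A}^{\mathrm{lev}}_{11}\sim X$ over $\Q$. First I would note that $X$ is a smooth cubic threefold, so by the Hodge numbers in \eqref{eqn:hstK} its $H^3$ is a $10$-dimensional $\Q_\ell$-vector space with a pure weight-$3$ Galois action, and $H^1=H^5=0$, $H^0,H^2,H^4,H^6$ contributing only Tate-type factors to the full $L$-function. Since birational smooth projective threefolds have isomorphic $H^3$ as Galois modules (the intermediate Jacobian / $H^3$ is a birational invariant in this range, by resolution and the blow-up formula for $H^3$, which only adds $H^1$ of the centers, all zero here since they are rational curves/points), it suffices to identify $H^3$ of a smooth projective model of $\mathcal{A}^{\mathrm{lev}}_{11}$.

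Next I would pass to the smooth cover $S_{\Gamma(11)}$ and use that $H^3$ of (a smooth compactification of) $\mathcal{A}^{\mathrm{lev}}_{11}$ is the $G$-invariant part of $H^3$ of (a smooth $G$-equivariant compactification of) $S_{\Gamma(11)}$, where $G=g\K(11)^{\mathrm{lev}}g^{-1}/\Gamma(11)$. The key input is the Eichler–Shimura–type / Matsushima–Murakami description of the cohomology of the Siegel threefold: the interior cohomology $H^3_{!}$ decomposes according to automorphic representations of $\Sp_2(\mathbb{A})$ of level $\Gamma(11)$ whose archimedean component contributes to $H^3$, and the Galois action is given by the associated ($4$-dimensional spin, or here after taking the relevant pieces, $2$-dimensional) Galois representations. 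The claim is that, after taking $G$-invariants, the only contributions are the CM forms $f\otimes\chi^i$: one shows the relevant automorphic spectrum consists of (a) Eisenstein/CAP (Saito–Kurokawa or Howe–PS) pieces and (b) the Yoshida-type endoscopic packets, and that matching level and central-character constraints at $11$ (a prime that is $\equiv 1 \pmod 5$, supporting the order-$5$ character $\chi$ of conductor $11$) together with the CM condition coming from $\Q(\zeta_{11})/\Q(\sqrt{-11})$ forces exactly the five twists $f\otimes\chi^i$ to appear, each contributing a $2$-dimensional Galois representation $\rho_{f\otimes\chi^i}(-1)$, i.e.\ an $L$-factor $L(s-1,f\otimes\chi^i)$. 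Summing the five $2$-dimensional pieces gives the degree-$10$ factor matching \eqref{eqn:hstK}.

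To pin down which forms occur I would argue concretely: the curve/variety $X$ over $\Q$ has good reduction away from $11$, so the $L$-function is a product of $L$-functions of weight-$2$ newforms (up to Tate twist) of level a power of $11$; counting points of $X$ over $\mathbb{F}_p$ for small $p$ (or invoking the modularity of rigid/Hodge-type pieces) identifies the Frobenius eigenvalues. The CM structure is then visible because $X$ carries an action of $\Z/5$ (the cyclic permutation of the five coordinates $x_0,\dots,x_4$ in the Klein equation), and this $\Z/5$ acts on $H^3$, decomposing the $10$-dimensional space into five $2$-dimensional eigenspaces permuted compatibly with the twisting action $f\otimes\chi^i$; matching the $\Q(\zeta_{11})$-field of definition with $\Q(\sqrt{-11})$ (the degree-$2$ subfield, fixed by the order-$5$ Galois group) shows each eigenspace is the Galois representation of the CM form $f\otimes\chi^i$. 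Independence of $\ell$ then follows because each $\rho_{f\otimes\chi^i}$ is part of a compatible system (Deligne), so the product is too.

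The main obstacle I expect is step two: rigorously controlling the full (not just interior) cohomology of a toroidal compactification of $S_{\Gamma(11)}$ — the boundary contributions and the precise automorphic description at the bad prime $11$ — and proving that no non-CM cusp forms of $\Sp_2$ of that level contribute $G$-invariant classes in $H^3$. In practice one sidesteps this by working directly with the smooth cubic $X$: compute enough Frobenius traces to determine the semisimple Galois representation up to finitely many possibilities, then use the Hodge numbers, the $\Z/5$-symmetry, and Faltings–Serre (or a Livné-type argument modulo a small prime) to conclude that $H^3_{\mathrm{et}}(X_{\bQ},\Q_\ell)\cong\bigoplus_{i=0}^4\rho_{f\otimes\chi^i}(-1)$, which is exactly the asserted identity of $L$-functions.
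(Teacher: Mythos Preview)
Your proposal takes a route that, in the paper's logic, is inverted: the paper computes $L(s,H^3_{\mathrm{et}}(X_{\bQ},\Q_\ell))$ \emph{first}, by purely geometric means having nothing to do with $\mathcal{A}^{\mathrm{lev}}_{11}$ or automorphic forms, and only afterwards uses this computation as motivation to construct the endoscopic lifts on the Siegel side. In fact Theorem~1.3(ii) shows that the spinor $L$-function of each $\Pi_i$ contains a factor $L(s,g\otimes\chi^i)$ (with $g$ of weight~$4$) that does \emph{not} appear in $L(s,H^3_!(\mathcal{A}^{\mathrm{lev}}_{11}))$; so extracting $H^3(X)$ from the automorphic spectrum of the Siegel threefold is precisely the delicate point of the paper, not a shortcut to Theorem~1.1. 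Your birational-invariance step is likewise unjustified: $\mathcal{A}^{\mathrm{lev}}_{11}$ is singular, and the comparison of its parabolic cohomology with $H^3(X)$ is again the content of Theorem~1.3, proved only \emph{after} one already knows $L(X)$.

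The paper's actual argument is the classical intermediate-Jacobian method for cubic threefolds. One has the motive identity $h^3(X)\simeq h^1(A)(1)$, where $A$ is the Albanese of the Fano surface of lines on $X$ (Bombieri--Swinnerton-Dyer, Manin). A result of Adler--Ramanan gives $A\cong E^5$ over $\C$, with $E$ the CM elliptic curve for $\Q(\sqrt{-11})$. The cyclic $\Z/5\Z$ automorphism $\alpha$ you mention then acts on $A$ over $\Q$; the quotient $B=A/(\alpha-1)A$ is one-dimensional over $\Q$ with CM by $\mathcal{O}_K$ and conductor a power of $11$, hence $B=E$. The complement $B'=A/B$ is shown to be $\Q$-simple by computing the local $L$-factor at $3$ and observing that its degree-$8$ part is irreducible over $\Q$; moreover $\Q(\zeta_5)\hookrightarrow\mathrm{End}_\Q(B')\otimes\Q$ via $\alpha$. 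By Ribet's $\GL_2$-type theorem together with Serre's conjecture (Khare--Wintenberger), $B'$ is isogenous to a Shimura abelian variety $A_h$, and a descent argument inside $\Q(\mu_{11^\infty})$ forces $h=f\otimes\chi$. Your fallback Faltings--Serre/Livn\'e idea is a reasonable alternative strategy, but the paper never needs point-counting beyond the single prime $p=3$: once the problem is reduced to an abelian variety over $\Q$, modularity of $\GL_2$-type varieties finishes the job.
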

By Theorem 1.1 and the Hodge type of $X$, 
it is quite natural to 
predict the existence of non-holomorphic differential forms on $\mathcal{A}^{{\rm lev}}_{11}$ 
of Hodge type $(2,1)$ of which corresponding automorphic forms 
on GSp$_2(\A)$ are liftings related to 
the elliptic modular form $f$ 
(we will discuss on the (1,1)-part in another paper). 
In a similar situation, in \cite{o&y}, 
authors treated a unique holomorphic differential 3-form on some Siegel threefold.
Since the space of holomorphic 3-forms  is a birational invariant, we can study this form by using an 
explicit birational model of the Siegel threefold in \cite{o&y}. 
In this paper we treat non-holomorphic differential forms.
However their space is not a birational invariant. 
Therefore we cannot directly construct  
non-holomorphic differential forms on $\mathcal{A}^{{\rm lev}}_{11}$ from 
them on $X$.   
Note that since $H^{3,0}(X)=0$,  
there does not exist any holomorphic 
differential 3-form on 
$\mathcal{A}^{{\rm lev}}_{11}$ which extends to one on any 
birational smooth model of $\mathcal{A}^{{\rm lev}}_{11}$.  
So this case will give a first and fascinating example 
to spur authors on an explicit construction of 
non-holomorphic differential forms of which corresponding 
automorphic forms on GSp$_2(\A)$ are liftings related to 
the elliptic modular form $f$. 

We now explain the second main result. 
For a unitary irreducible automorphic representation $\pi$ of GL$_2(\A)$, 
let $L(s,\pi)$ be the automorphic $L$-function of $\pi$ (see \cite{JL}) and if 
$\pi$ is attached to an elliptic modular form $h$ of weight $k$, then 
$L(s+(k-1)/2,h) =L(s,\pi)$ by definition.  
Let $\mu$ be the gr\"o{\ss}encharacter of $K = \Q(\sqrt{-11})$ associated to $f$ and 
$g$ be the elliptic modular form associated to  $\mu^3$.
Note that $g$ is of weight 4 and of level $11^2$. 
Then we have: 
\begin{theorem}$($Theorem 3.2$)$ 
There are irreducible, cuspidal, automorphic, globally generic representations $\Pi_{i}, 0\le i\le 4$ of $\GSp_2(\A)$ with $\Pi_{i,\i}$ being a discrete series whose Blattner parameter is $(3,-1)$.
Each $\Pi_i$ satisfies the following properties:

\medskip
\noindent
$($i$)$ 
There is a non-zero right $\K(11)_{\A}^{{\rm lev}}$-invariant automorphic form $F_i \in \Pi_i$. 

\medskip
\noindent
$($ii$)$ 
The spinor $L$-function of $\Pi_i$  
(which is denoted by $L(s,\Pi_i,{\rm spin})$) is $L(s,\pi_f \ot \chi^i)L(s,\pi_g \ot\chi^i)$, where $\pi_f$ 
$($resp. $\pi_g)$ is the unitary irreducible 
automorphic representation attached to $f$ $($resp. $g)$. 
$($see Novodvorsky \cite{Nov} for the definition of the spinor $L$-function of a generic representation$)$. Note that 
$L(s-3/2,\Pi_i,{\rm spin})=L(s-1,f\otimes\chi^i)L(s,g\otimes\chi^i)$. 
\end{theorem}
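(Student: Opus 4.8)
\proof
The plan is to realize each $\Pi_i$ as the unique globally generic member of the endoscopic (Yoshida-type) packet of $\GSp_2(\A)$ attached to the pair $(\pi_f\ot\chi^i,\,\pi_g\ot\chi^i)$, and then to read off properties (i) and (ii), together with the shape of $\Pi_{i,\i}$, from the explicit description of that packet and of its local constituents at $\infty$ and at $11$.

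First I would collect what is needed about the input pair. With $\mu$ the Hecke character of $K=\Q(\sqrt{-11})$ and $g$ the weight-$4$ form attached to $\mu^3$ as above, $\pi_f\ot\chi^i$ is the automorphic representation of $\GL_2(\A)$ attached to the Hecke character $\mu\cdot(\chi^i\circ N_{K/\Q})$ of $K$, and $\pi_g\ot\chi^i$ the one attached to $\mu^3\cdot(\chi^i\circ N_{K/\Q})$. Since $K$ has class number one and $11$ is the unique ramified prime, the conductors of these Hecke characters --- hence the local components of $\pi_f\ot\chi^i$ and $\pi_g\ot\chi^i$ at every finite place --- are computed directly from $\mu$ and $\chi$: both are spherical away from $11$, and at $11$ they are the dihedral representations of $\GL_2(\Q_{11})$ induced from the corresponding ramified characters of $K_{11}^{\times}$. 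In particular each pair consists of two cuspidal automorphic representations of $\GL_2(\A)$ which are non-isomorphic (they have different weights) and share the central character $\chi^{2i}$, so that it defines a genuine discrete Yoshida parameter; moreover its archimedean component has the same infinitesimal character as the trivial representation of $\GSp_2(\R)$, because the weight-$2$ and weight-$4$ discrete series of $\GL_2(\R)$ contribute the ``exponents'' $\{\pm\tfrac12\}$ and $\{\pm\tfrac32\}$, whose union is the infinitesimal character $(2,1)$.

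Next I would construct $\Pi_i$ by the theta correspondence from $\GO(2,2)(\A)$ to $\GSp_2(\A)$ (equivalently, by the relevant part of the classification of the discrete automorphic spectrum of $\GSp_2$): applied to $(\pi_f\ot\chi^i)\bt(\pi_g\ot\chi^i)$, regarded as an automorphic representation of $\GSO(2,2)(\A)$ and suitably extended to $\GO(2,2)(\A)$, it yields an irreducible, cuspidal, globally generic automorphic representation $\Pi_i$ of $\GSp_2(\A)$. I would invoke here Roberts' theorem that the Yoshida packet attached to a pair of distinct cuspidal $\GL_2$-representations with equal central character contains a unique globally generic member, realized by this theta lift, and that this member is cuspidal automorphic --- no additional hypothesis being needed for it, in contrast with the holomorphic member of the packet, whose existence as a holomorphic Siegel cusp form is obstructed in general. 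Its spinor $L$-function is then $L(s,\pi_f\ot\chi^i)L(s,\pi_g\ot\chi^i)$, which is property (ii): under the endoscopic embedding $\GL_2\t_{\GL_1}\GL_2\hookrightarrow\GSp_2(\C)$ the four-dimensional spin representation restricts to the direct sum of the two standard representations, and for a generic representation the spinor $L$-function in the sense of Novodvorsky \cite{Nov} agrees with this one. At the archimedean place, the generic member of the packet with infinitesimal character $(2,1)$ is precisely the large discrete series with Blattner parameter $(3,-1)$; the holomorphic member of that packet, of Blattner parameter $(3,3)$, is not globally generic --- which reflects the elementary fact that a weight-$2$ and a weight-$4$ elliptic form can combine only into a non-holomorphic form.

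Finally, for property (i): since $\Pi_i$ is spherical at every finite place different from $11$, a nonzero $\K(11)^{{\rm lev}}_{\A}$-invariant $F_i\in\Pi_i$ exists if and only if $\Pi_{i,11}^{\K(11)^{{\rm lev}}_{11}}\neq 0$. I would establish this by first identifying $\Pi_{i,11}$ as the generic constituent of the local Yoshida packet of the pair consisting of the $11$-components of $\pi_f\ot\chi^i$ and $\pi_g\ot\chi^i$ determined above, via the local classification of irreducible admissible representations of $\GSp_2(\Q_{11})$, and then exhibiting the required fixed vector by means of the Roberts--Schmidt theory of local (para)modular newforms for $\GSp_2$, adapted to the compact open subgroup $\K(11)^{{\rm lev}}_{11}$ cut out by the canonical level structure. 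I expect this last step --- matching the precise type of $\Pi_{i,11}$ with the precise shape of $\K(11)^{{\rm lev}}_{11}$ --- to be the main technical obstacle. Once it is carried out, the representations $\Pi_0,\dots,\Pi_4$ are automatically pairwise non-isomorphic, since their spinor $L$-functions are pairwise distinct, and the theorem follows.
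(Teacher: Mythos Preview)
Your overall architecture matches the paper's: realize $\Pi_i$ as the globally generic theta lift $\Theta((\pi_f\ot\chi^i)\bt(\pi_g\ot\chi^i))$ from $\GSO_{2,2}$, read off the spinor $L$-function and the archimedean Blattner parameter $(3,-1)$ from the shape of the packet, and then reduce (i) to a purely local statement at $p=11$. The identification of (ii) and of $\Pi_{i,\infty}$ is fine.

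The gap is at the local step for (i). You propose to obtain the $\K(11)^{\rm lev}_{11}$-fixed vector by ``Roberts--Schmidt theory of local paramodular newforms, adapted to $\K(11)^{\rm lev}_{11}$.'' But Roberts--Schmidt only controls $\K(N)$-fixed vectors, and in fact shows that the paramodular newform level of $\Pi_{i,11}$ is $11^4$ (the paper records this in Remark~3.6); since $\K(11)^{\rm lev}_{11}$ is neither a paramodular group nor contained in $\K(11^4)_{11}$, this theory gives no information about $\K(11)^{\rm lev}_{11}$-invariants. You acknowledge this is ``the main technical obstacle,'' but offer no mechanism to bridge it.

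The paper's route at $p=11$ is genuinely different and is where the content lies. First it proves (Lemma~3.1) that $\pi_{f,11}\simeq\pi_{g,11}$ as supercuspidals of $\GL_2(\Q_{11})$, so that $\Pi_{i,11}$ is the generic constituent $\tau(S,\chi^i\pi_{f,11})$ of $1\rtimes\chi^i\pi_{f,11}$; you do not isolate this coincidence, which is what makes the local computation tractable. Then, rather than invoking any newform theory, the paper writes down an explicit Schwartz--Bruhat function $\varphi_{11}^{\rm lev}$ that is $\K(11)^{\rm lev}_{11}$-invariant under the Weil representation and computes the resulting local Whittaker integral $W_{11}(1)$ by decomposing $Z_{(e_1,\alpha)}\backslash H^1(\Q_{11})/(\Gamma_0(11^2)\times\Gamma_0(11^2))$ into four coset types, showing two types cancel or vanish and the remaining two contribute strictly positive terms. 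This hands-on nonvanishing is the substitute for the newform argument you hoped for.
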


The strategy of the construction of $\Pi_i$ is as follows. 
In our case, by combining several facts, we guess that $\Pi_i$ is a weak endoscopic lift in the sense of \cite{W}.  
By results of Kudla, Rallis, and Soudry \cite{K-R-S}, and Roberts \cite{Ro}, we know that such $\Pi_i$ is given by a 
$\th$-lift $\Th(\pi_1 \bt \pi_2)$ of a pair $(\pi_1,\pi_2)$ of two irreducible cuspidal automorphic representations of $\GL_2(\A)$ 
(Here we identify $(\pi_1, \pi_2)$ with an automorphic representation of $\GSO_{2,2}(\A)$). 
It is natural to guess that $\pi_1$ should be $\pi_f \ot \chi^i$.
We also have to find a candidate of $\pi_2$. 
After trial and error  
(but there is no 
precise evidence) 
we decide $\pi_2= \pi_g \ot \chi^i$ and 
by choosing a suitable Schwartz-Bruhat function for the $\th$-lift, 
realize a non-zero right $\K(11)_{\A}^{\rm lev}$-invariant vector $F_i \in \Th((\pi_f \ot \chi^i) \bt (\pi_g \ot \chi^i))$ in Theorem 1.2.   

We should discuss a comparison of differential forms and $L$-functions 
between $X$ and $\mathcal{A}^{{\rm lev}}_{11}$. 
Let $H^3_{{\rm cusp}}(\mathcal{A}^{{\rm lev}}_{11},\C)$ be 
the cuspidal part of the Betti cohomology $H^3(\mathcal{A}^{{\rm lev}}_{11},\C)$ (see Section 4).
Then by combining above two theorems, we have:
\begin{theorem} The notations being as above. Let $E_{i}$ be the elliptic curve attached to $f\otimes\chi^i$ 
for each $i\in\{0,1,2,3,4\}$. We fix a non-zero holomorphic 1-form 
$\omega_i$ on $E_i$. 

\medskip
\noindent
$($i$)$ There exists a linear map $H^3_{{\rm dR}}(X_\C)\simeq \displaystyle\bigoplus_{0\le i\le 4}
H^3_{{\rm dR}}({E_{i}}_\C\times\mathbb{P}^2_\C)
\longrightarrow H^3_{{\rm cusp}}(\mathcal{A}^{{\rm lev}}_{11},\C)$ which 
is injective and preserves 
the Hodge structure. Here the second map is given by  
$\omega_i\otimes\Lambda\mapsto {F_i}_\infty,\ 
\overline{\omega}_i\otimes\Lambda\mapsto \overline{{F_i}_\infty}$ where $\Lambda$ is a generator of $H^{1,1}(\mathbb{P}^2_\C)$.

\medskip
\noindent
(ii) for any $i$, the $L$-function of $\Pi_i$, more precisely $\pi_g\otimes\chi^i$ does not contribute to the $L$-function of 
the parabolic cohomology $H^3_{{\rm et},!}({\mathcal{A}^{{\rm lev}}_{11}}_{\overline{\Q}},\Q_\ell)$ 
defined by 
the image of the natural map from 
the $\ell$-adic \'etale cohomology with compact support 
$H^3_{{\rm et},c}({\mathcal{A}^{{\rm lev}}_{11}}_{\overline{\Q}},\Q_\ell)$ 
to $H^3_{{\rm et}}({\mathcal{A}^{{\rm lev}}_{11}}_{\overline{\Q}},\Q_\ell)$. 
\end{theorem}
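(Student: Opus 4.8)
\medskip\noindent\emph{Proof proposal.}

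For part (i), the first isomorphism is a Hodge-theoretic refinement of Theorem 1.1. The proof of Theorem 1.1 realizes $H^3_{{\rm et}}(X_{\bQ},\Q_\ell)$ as $\bigoplus_{i=0}^{4}\rho_{f\ot\chi^i}(-1)$, and since $f$ --- hence each $f\ot\chi^i$ --- has complex multiplication by $K=\Q(\sqrt{-11})$, the representation $\rho_{f\ot\chi^i}$ is the $\ell$-adic realization of $H^1({E_i})$, so that $\rho_{f\ot\chi^i}(-1)\cong H^1({E_i})\ot H^2(\mathbb{P}^2)\cong H^3({E_i}\t\mathbb{P}^2)$ by the Künneth formula. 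All the motives in play (the relevant summand of $H^3(X)$ and the pieces $H^1(E_i)\ot H^2(\mathbb{P}^2)$) are motives with complex multiplication by abelian extensions of $K$, hence are determined by their $\ell$-adic realization; the isomorphism of Galois representations therefore promotes to an isomorphism of motives, and in particular to an isomorphism of Hodge structures $H^3_{{\rm dR}}(X_\C)\simeq\bigoplus_i H^3_{{\rm dR}}({E_i}_\C\t\mathbb{P}^2_\C)$. (Equivalently, one may invoke the classical fact that the intermediate Jacobian of Klein's cubic threefold is isogenous to $\prod_i E_i$.) Under this identification each summand is two-dimensional, spanned by $\w_i\ot\Lambda$ of Hodge type $(2,1)$ and $\ol{\w}_i\ot\Lambda$ of type $(1,2)$.

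The second map is built from the archimedean representation theory. Since $\Pi_{i,\i}$ is the discrete series of $\GSp_2(\R)$ with Blattner parameter $(3,-1)$, it is the generic (``large'') member of the discrete series $L$-packet with the infinitesimal character of the trivial representation; its $(\mathfrak{g},K)$-cohomology is concentrated in degree $3$, where it is one-dimensional and of Hodge type $(2,1)$ (the opposite large member contributing the type-$(1,2)$ class), by the Vogan--Zuckerman description of the cohomology of the Siegel threefold. Hence, by the Matsushima--Borel decomposition
$$H^3_{{\rm cusp}}(\mathcal{A}^{{\rm lev}}_{11},\C)=\bigoplus_{\pi}H^3(\mathfrak{g},K;\pi_\i)\ot\pi_f^{\K(11)^{{\rm lev}}}$$
(the sum over cuspidal automorphic representations $\pi$ of $\GSp_2(\A)$; quotient singularities are irrelevant, as one computes on the smooth Galois cover $S_{\Gamma(11)}$ and takes $G$-invariants), the $\K(11)^{{\rm lev}}$-fixed form $F_i$ of Theorem 1.2 yields a nonzero class $F_{i,\i}$ of type $(2,1)$ in $H^3_{{\rm cusp}}(\mathcal{A}^{{\rm lev}}_{11},\C)$, and its complex conjugate $\ol{F_{i,\i}}$ a nonzero class of type $(1,2)$. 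Thus $\w_i\ot\Lambda\mapsto F_{i,\i}$, $\ol{\w}_i\ot\Lambda\mapsto\ol{F_{i,\i}}$ defines a $\C$-linear map to $H^3_{{\rm cusp}}(\mathcal{A}^{{\rm lev}}_{11},\C)$ which respects the Hodge bigrading by construction. It is injective: the $\Pi_i$ are pairwise non-isomorphic because their spinor $L$-functions $L(s,\pi_f\ot\chi^i)L(s,\pi_g\ot\chi^i)$ are pairwise distinct (the $\chi^i$ being distinct), and likewise the conjugates $\ol{\Pi_i}$ are pairwise distinct and distinct from every $\Pi_j$ (their archimedean component being the other large member); so the ten classes $\{F_{i,\i},\ol{F_{i,\i}}\}_{0\le i\le4}$ lie in ten distinct $\pi$-isotypic summands and, being individually nonzero, are linearly independent. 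Composing with the first isomorphism gives the map of (i).

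For part (ii), suppose for contradiction that $\pi_g\ot\chi^i$ contributed to $H^3_{{\rm et},!}:=H^3_{{\rm et},!}(\mathcal{A}^{{\rm lev}}_{11,\bQ},\Q_\ell)$, i.e.\ that the Galois representation $\rho_{g\ot\chi^i}$ attached to the weight-$4$ form $g\ot\chi^i$ (equivalently to $\mu^3\cdot(\chi^i\circ{\rm Nm}_{K/\Q})$) were a Jordan--Hölder constituent. Fixing a smooth projective compactification $\ol{\mathcal A}/\Q$ of a resolution of $\mathcal{A}^{{\rm lev}}_{11}$, the module $H^3_{{\rm et},!}$ is a subquotient of $H^3_{{\rm et}}(\ol{\mathcal A}_{\bQ},\Q_\ell)$, hence pure of weight $3$ and de Rham, and by the de Rham comparison the multiplicity of the Hodge--Tate weight $3$ in $H^3_{{\rm et},!}$ equals $h^{3,0}$ of the underlying weight-$3$ Hodge structure $H^3_!(\mathcal{A}^{{\rm lev}}_{11},\C)$, which is at most $h^{3,0}(\ol{\mathcal A})=h^{3,0}(X)=0$ by the birational invariance of $h^0(\Omega^3)$ for smooth projective varieties together with (\ref{eqn:hstK}); concretely, a holomorphic $3$-form on $\mathcal{A}^{{\rm lev}}_{11}$ extends to a holomorphic $3$-form on a smooth projective model birational to $X$, of which there are none. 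But $\rho_{g\ot\chi^i}$ has Hodge--Tate weights $\{0,3\}$, so it carries the Hodge--Tate weight $3$; since the Hodge--Tate weights of a subquotient of a de Rham representation occur among those of the representation, this is impossible. Hence $\pi_g\ot\chi^i$ --- and thus $L(s,\pi_g\ot\chi^i)$ --- does not occur in $L(s,H^3_{{\rm et},!}(\mathcal{A}^{{\rm lev}}_{11,\bQ},\Q_\ell))$.

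I expect the main technical obstacle to be the archimedean bookkeeping in part (i): checking that the Blattner parameter $(3,-1)$ pins down precisely the large discrete series member whose degree-$3$ relative Lie algebra cohomology sits in Hodge bidegree $(2,1)$ (so that the Hodge structure is preserved rather than transposed) and that the resulting cuspidal cohomology class together with its conjugate are genuinely nonzero and independent; the remaining ingredients are an assembly of Theorems 1.1 and 1.2 with standard Hodge-theoretic and $p$-adic input.
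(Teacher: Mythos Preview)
Your treatment of part~(i) is in the same spirit as the paper's, only more explicit: the paper simply says that (i) ``directly follows from the results in Section~2 and Section~3'', and your discussion via the CM motives and the $(\mathfrak g,K)$-cohomology of the large discrete series is a reasonable unpacking of that sentence.

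For part~(ii) you take a genuinely different route. The paper argues by elementary weight/excision analysis: choosing a common open $U\subset X$ and $U\subset\mathcal A^{\rm lev}_{11}$, the two long exact sequences in compactly supported cohomology show that $H^3_c(\mathcal A^{\rm lev}_{11})$ and $H^3(X)$ differ only by pieces coming from $H^3_c$ of closed subsets of dimension $\le 2$; by Poincar\'e duality these contribute Frobenius eigenvalues of the form $p\alpha$ with $\alpha\in\overline{\Z}$, and since $H^3(X)$ already has this divisibility (Proposition~2.5), every Frobenius eigenvalue on $H^3_{!}(\mathcal A^{\rm lev}_{11})$ is divisible by $p$. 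A weight-$4$ newform has a $p$-unit Frobenius eigenvalue at every ordinary prime, so $L(s,g\otimes\chi^i)$ cannot appear. This is purely $\ell$-adic and uses no $p$-adic Hodge theory.

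Your argument instead invokes the de~Rham comparison to read off a vanishing Hodge--Tate weight~$3$. The idea is sound, but there is a gap at the step ``$H^3_{{\rm et},!}$ is a subquotient of $H^3_{{\rm et}}(\overline{\mathcal A}_{\bQ},\Q_\ell)$ for a smooth projective $\overline{\mathcal A}$ with $h^{3,0}(\overline{\mathcal A})=0$''. Because $\mathcal A^{\rm lev}_{11}$ is singular, neither the identification of $H^3_!$ with a subquotient of $H^3$ of a smooth compactification nor the de~Rham comparison applies directly to it, and passing to a resolution $\widetilde{\mathcal A}\to\mathcal A^{\rm lev}_{11}$ does not obviously carry $H^3_!$ along (there is no pullback on $H^*_c$ for proper maps). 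The natural fix---going up to the smooth Galois cover $S_{\Gamma(11)}$ as in the introduction---does give purity, but it does \emph{not} give $h^{3,0}=0$: the paper explicitly constructs holomorphic endoscopic lifts contributing to $H^{3,0}$ of $S_{\Gamma(11)}$, so that route would not conclude. The paper's direct Frobenius-divisibility argument sidesteps all of this by never needing $H^3_!$ to sit inside the cohomology of a smooth proper model; it only compares Frobenius eigenvalues term by term through the excision sequences. If you want to salvage your approach, you would need an independent argument that the weight-$3$ Hodge structure on $H^3_!(\mathcal A^{\rm lev}_{11},\C)$ has $h^{3,0}=0$ (equivalently, that no holomorphic Siegel cusp form of weight~$3$ is $\K(11)^{\rm lev}$-invariant), which is essentially the content of the inadmissibility discussion in Section~5.
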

Recall that 
${\mathcal{A}^{{\rm lev}}_{11}}$ is a singular variety. 
So we do not know a priori whether 
$H^3_{{\rm et},!}({\mathcal{A}^{{\rm lev}}_{11}}_{\overline{\Q}},\Q_\ell)$ is 
pure of weight 3. 
We check this as follows. With the notations of the begining of the 
introduction, we have  by transfer theorem,  
$$H^3_{{\rm et},c}({{\mathcal{A}^{{\rm lev}}_{11}}}_{\bQ},\Q_\ell)
\stackrel{\sim}{\lra} 
H^3_{{\rm et},c}({S_{\Gamma(11)}}_{\bQ},\Q_\ell)^G
\hookrightarrow 
H^3_{{\rm et},c}({S_{\Gamma(11)}}_{\bQ},\Q_\ell).$$
It is easy to see that 
this map is compatible with the pull-pack $\pi^\ast$. Note that $\pi^\ast$ 
 is injective because $\pi$ is a finite map. 
Hence $H^3_{{\rm et},!}({\mathcal{A}^{{\rm lev}}_{11}}_{\overline{\Q}},\Q_\ell)$ 
is a Gal$(\bQ/\Q)$-submodule of 
$H^3_{{\rm et},!}({S_{\Gamma(11)}}_{\overline{\Q}},\Q_\ell)$. 
Since $S_{\Gamma(11)}$ is smooth, by Corollaire (3.3.6) of \cite{deligne}, 
the cohomology $H^3_{{\rm et},!}({S_{\Gamma(11)}}_{\overline{\Q}},\Q_\ell)$ is pure of weight 3 and so is $H^3_{{\rm et},!}({\mathcal{A}^{{\rm lev}}_{11}}_{\overline{\Q}},\Q_\ell)$. 
\begin{remark}
Theorem 1.3-(i) does not make sense if we do not tell about 
$\Q$-Hodge structures or period Lattices in those cohomologies. 
The period lattice $H^3(X,\Z)$ is determined by Roulleau \cite{rou}. 
If we determine  $H^3_!(\mathcal{A}^{{\rm lev}}_{11},\Z)$ and show that 
it is quasi-isomorphic to $H^3(X,\Z)$ under the map of Theorem 1.3-(i), 
we will be able to prove that $L(s,H^3_{{\rm et}}(X_{\overline{L}},\Q_\ell))$ occurs in 
$L(s,H^3_{{\rm et},!}({\mathcal{A}^{{\rm lev}}_{11}}_{\overline{L}},\Q_\ell))$ 
for some number field $L$.  
\end{remark}
Let $\G_{\A}$ be an open compact subgroup of $\GSp_2(\A)$ such that 
$\Gamma:=\G_{\A} \cap \Sp_2(\Q)$ is an arithmetic subgroup of $\Sp_2(\Q)$ 
(not necessary torsion free).  
Let us denote by $S_\Gamma = \G \bs \mathbb{H}_2$ the corresponding Siegel modular threefold. 
Take a Galois cover 
$\pi:S_{\G'}\lra S_\G$ with the Galois group $G$ 
so that $S_{\G'}$ is a fine moduli space. For an irreducible representation $L_{a,b}$ over $\C$ of $\Sp_2(\R)$ with dominant highest weight $(a,b)$ with $a \ge b \ge 0$ (we assume that $L_{a,b}$ 
comes from an irreducible algebraic representation of 
$\Sp_2$), we can define the local system $\mathcal{L}_{a,b}/\C$ and the ($\ell$-adic) \'etale local system $\mathcal{L}^{{\rm et}}_{a,b}$  on $S_{\G'}$ 
associated to $L_{a,b}$ by using 
the Hodge bundle on $S_{\G'}$ (cf. Section 1 of \cite{W}). Put $w=a+b+3$. 
Let ${\rm Gr}^W_wH^3_{{\rm betti}}({S_{\G'}}_\C,\mathcal{L}_{a,b})$ be 
the graded quotient of degree $w$ of a mixed Hodge structure on 
$H^3_{{\rm betti}}({S_{\G'}}_\C,\mathcal{L}_{a,b})$. We define 
${\rm Gr}^W_wH^3_{{\rm betti}}({S_{\G}}_\C,\mathcal{L}_{a,b}):=
{\rm Gr}^W_wH^3_{{\rm betti}}({S_{\G'}}_\C,\mathcal{L}_{a,b})^G$ and 
${\rm Gr}^W_wH^3_{{\rm et}}({S_{\G}}_{\bQ},\mathcal{L}^{{\rm et}}_{a,b}):=
{\rm Gr}^W_wH^3_{{\rm et}}({S_{\G'}}_{\bQ},\mathcal{L}^{{\rm et}}_{a,b})^G$. 

In what follows, we will discuss our results with  
known results or conjectures about irreducible cuspidal automorphic 
representation $\Pi$ of $\GSp_2(\A)$ which arises from a differential form of 
${\rm Gr}_w^W H^3_{{\rm betti}}({S_\Gamma}_\C,\mathcal{L}_{a,b})$. 
Assume that $\Pi = \ot_v \Pi_v$ is weakly equivalent to a 
representation of multiplicity one.
Let $L_{\Pi}$ be the set of irreducible cuspidal automorphic representations 
$\Pi' = \ot_v \Pi_v'$ of $\GSp_2(\A)$ such that every $\Pi_v'$ belongs to 
the local $L$-packet of $\Pi_v$ for each place $v$ of $\Q$.
See \cite{GT} and \cite{Paul} for the definition of the local $L$-packet.
By Theorem 2.1 of \cite{Onote}, if $\Pi' \in L_{\Pi}$, then $\Pi_{\i}'|_{\Sp_2}$ or $\ol{\Pi_{\i}'}|_{\Sp_2}$ is the holomorphic discrete series representation with Blattner parameter $(a+b+3,a+b+3)$ or the non-holomorphic discrete series one with Blattner parameter $(a+3,-b-1)$  which is generic.
For a $\Pi' \in L_{\Pi}$, let 
\[
c_{\G}(\Pi') = \dim_{\C} \{f \in \Pi' \mid \mbox{$f$ is right $\G_{\A}$-invariant} \}.
\]
Let $c_{\G}^{H}(L_{\Pi})$ (resp. $c_{\G}^{W}(L_{\Pi})$) be the sum of $c_{\G}(\Pi')$ over $\Pi' \in L_{\Pi}$ such that $\Pi_{\i}'|_{\Sp_2}$ is the holomorphic (resp. non-holomorphic) discrete series representation with Blattner parameter $(a+b+3,a+b+3)$ (resp. $(a+3,-b-1)$).
First of all, if $\Pi$ is neither a weak endoscopic lift nor a CAP representation, then by Propostion 1.5 of Weissauer \cite{W}, for any $\Pi' \in L_{\Pi}$ there exists $\Pi''$ such that $\Pi_p' \simeq \Pi_p''$ for all 
nonarchimedean $p$ but $\Pi_{\i}' \not \simeq \Pi_{\i}''$ 
and $\Pi_{\i}' \not \simeq \ol{\Pi_{\i}''}$. 
Therefore, $c_{\G}^{H}(L_{\Pi}) = c_{\G}^{W}(L_{\Pi})$ and by TH\'EOR\`EM 7.5 of Laumon \cite{laumon} and Theorem III of \cite{W}, 
\[
L_\Sigma(s,\Pi)^{c_{\G}^{H}(L_{\Pi})} | L_\Sigma(s,{\rm Gr}_w^W H^3_{\rm et}({S_{\G}}_{\bQ},\mathcal{L}^{\rm et}_{a,b}))
\]
where $\Sigma$ is the set of finite places $v$ of 
which $\G_v \not\simeq \Sp_2(\Q_v)$ and we set $L_\Sigma(s,\Pi):= \prod_{v \not\in 
\Sigma} L(s,\Pi_v)$.
However their results of \cite{laumon} and \cite{W} 
do not tell us anything about 
the contribution of a weak endoscopic lift or a CAP representation to the middle cohomology of any Siegel modular threefold. 
On the other hand we have a 
conjectural description of the contribution of 
a weak endoscopic lift $\Pi$ (or also a CAP representation) to the 
(total) $\ell$-adic cohomology of $S_\G$ (Section 6 of \cite{tilouine}).
By Howe, Piatetski-Shapiro \cite{H-PS}, 
there exists an endoscopic lift $\Pi$ 
for a given pair $(\pi_1,\pi_2)$ where $\Pi$ is given by the $\th$-lift from $\GSO_{2,2}$ and globally generic.
Therefore $c_{\G}^W(L_{\Pi}) \neq 0$ for a sufficiently small $\G$.
We should note that 
\begin{itemize}
\item if $\Pi' \in L_{\Pi}$ and $\Pi_{\i} \not\simeq \Pi_{\i}'$, then $\Pi_p \not\simeq \Pi_p'$ for some nonarchimedean $p$.
\item it may be happen $c_{\G}^H(L_{\Pi}) = 0$ for any $\G$ (see  section 5 for the explanation). 
\end{itemize}
By Arthur's conjecture (cf. Section 6 of \cite{tilouine}) and $p$-adic Hodge theory, we guess
\begin{eqnarray*}
L_\Sigma(s,\pi_2)^{c_{\G}^{H}(L_{\Pi})} L_\Sigma(s,\pi_1)^{c_{\G}^{W}(L_{\Pi})} | L_\Sigma(s,{\rm Gr}_w^W H^3_{\rm et}({S_{\G}}_{\bQ},\mathcal{L}^{\rm et}_{a,b})),
\end{eqnarray*}
where $\pi_1$ (resp. $\pi_2$) is chosen so that $\pi_{1,\i}|_{\SL_2}$ (resp. $\pi_{2,\i}|_{\SL_2}$) is the discrete series representation of lowest weight $a-b+2$ (resp. $a+b+4$).
In the following specific case, we would like to give a conjecture.
Let $S$ be a Siegel threefold defined over $\Q$ with a Hecke 
correspondence $\gamma \subset S\times S$ which is also defined 
over $\Q$. 
Assume
\begin{eqnarray}
h^{3,0}(\gamma^\ast{\rm Gr}^W_wH^3_{{\rm betti}}(S_\C,\mathcal{L}_{a,b}))=0,\  
h^{2,1}(\gamma^\ast{\rm Gr}^W_wH^3_{{\rm betti}}(S_\C,\mathcal{L}_{a,b}))=1. \label{eqn:conjhtp}
\end{eqnarray}
Let $\Pi$ be the irreducible cuspidal automorphic representation attached to 
a unique generator of 
$H^{2,1}(\gamma^\ast{\rm Gr}^W_wH^3_{{\rm betti}}(S_\C,\mathcal{L}_{a,b}))$.
By Theorem 2.1 of \cite{Onote}, 
$\Pi_{\i}|_{\Sp_2(\R)}$ is the non-holomorphic discrete series representation with Blattner parameter $(a+3,-b-1)$ which is generic.
By Theorem III and Proposition 1.5 of \cite{W}, $\Pi$ is a CAP representaiton or a weak endoscopic lift of a pair $(\pi_1,\pi_2)$.
But, by section 4 of Schmidt \cite{Sc}, if a Saito-Kurokawa representation (a CAP representation associated to a Siegel parabolically induced representation) is not holomorphic, then its archimedean component is non-tempered and hence 
it is not a discrete series representation.
By Soudry \cite{So}, every CAP representation associated to a Klingen or Borel parabolically induced representation is given by a $\th$-lift from $\GO(L_{\A})$, where $L$ is a quadratic extension of $\Q$.
It is not hard to show that the archimedean component of the $\th$-lift is not generic.
Hence, $\Pi$ is a weak endoscopic lift.
Then our conjecture is:
\begin{conjecture}\label{conj:hodge1type}
Keep the notations as above. 
The following equality of $L$-functions holds up to finitely many local factors:
\[
L(s,\gamma^\ast{\rm Gr}_w^W H^3_{\rm et}({S_{\G}}_{\bQ},\mathcal{L}^{\rm et}_{a,b})) = L(s-b-1,f)
\]
where $f$ is the elliptic cusp form of weight $a-b+2$
 associated to $\pi_1$ (of the lower weight). 
\end{conjecture}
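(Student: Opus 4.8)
\emph{Sketch of a strategy.} The plan is to reduce the asserted identity to the factorisation of the spinor $L$-function of a weak endoscopic lift together with the Hodge hypothesis \eqref{eqn:conjhtp} (which is the abstract, Hecke-isotypic shadow of the Hodge data \eqref{eqn:hstK} of Klein's cubic threefold). By the reductions already carried out before the statement---using Schmidt \cite{Sc} and Soudry \cite{So}---$\Pi$ is a weak endoscopic lift, and by Howe--Piatetski-Shapiro \cite{H-PS}, Kudla--Rallis--Soudry \cite{K-R-S} and Roberts \cite{Ro} it is the globally generic member, $\th$-lifted from $\GSO_{2,2}$, of a pair $(\pi_1,\pi_2)$ of cuspidal automorphic representations of $\GL_2(\A)$; normalise the pair so that $\pi_{1,\i}|_{\SL_2}$ (resp.\ $\pi_{2,\i}|_{\SL_2}$) is the discrete series of lowest weight $a-b+2$ (resp.\ $a+b+4$), and let $f$ (of weight $a-b+2$) and $g$ (of weight $a+b+4$) be the elliptic cusp forms attached to $\pi_1,\pi_2$, with $\ell$-adic Galois representations $\rho_f,\rho_g$ of Hodge--Tate weights $\{0,a-b+1\}$ and $\{0,a+b+3\}$. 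The analogue of Theorem 1.2(ii)---the $\th$-correspondence from $\GSO_{2,2}$---gives, for every finite place $v$ outside a finite set $\Sigma$, the factorisation $L(s,\Pi_{v,{\rm spin}})=L(s,\pi_{1,v})L(s,\pi_{2,v})$.

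Next I would compute the $\gamma$-isotypic piece of the middle cohomology. Pulling back along the Galois cover $\pi$ to the fine moduli space $S_{\G'}$ and taking $G$-invariants, exactly as in the argument already used for $H^3_{{\rm et},!}(\mathcal{A}^{{\rm lev}}_{11})$, shows that $\gamma^\ast{\rm Gr}^W_3 H^3_{{\rm et}}(S_{\bQ},\pi_\ast\mathcal{L}^{{\rm et}}_{a,b})$ is pure of weight $a+b+3$ and is, up to semisimplification, the direct sum of the Galois representations cut out by the members $\Pi'\in L_\Pi$ lying over $\gamma$. By Theorem 2.1 of \cite{Onote} each $\Pi_{\i}'|_{\Sp_2}$ is either the holomorphic discrete series of Blattner parameter $(a+b+3,a+b+3)$, which occupies the extreme Hodge pieces $h^{3,0},h^{0,3}$, or the generic non-holomorphic discrete series of parameter $(a+3,-b-1)$, which occupies $h^{2,1},h^{1,2}$. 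Hypothesis \eqref{eqn:conjhtp} thus says precisely that over $\gamma$ no holomorphic member occurs (so the part of $c^H_\G(L_\Pi)$ visible on $\gamma^\ast{\rm Gr}^W_3 H^3$ vanishes) while exactly one generic member does; hence $\gamma^\ast{\rm Gr}^W_3 H^3_{{\rm et}}$ is $2$-dimensional and is cut out by this single generic member.

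The heart of the proof is to identify that $2$-dimensional Galois module with $\rho_f(-b-1)$. For this I would invoke Weissauer's analysis of weak endoscopic lifts in the cohomology of Siegel modular threefolds \cite{W}, together with Laumon \cite{laumon} and the discussion of Arthur's conjecture in Section 6 of \cite{tilouine}: the contribution of the whole packet $L_\Pi$ to ${\rm Gr}^W_3 H^3_{{\rm et}}$ decomposes, up to twist and semisimplification, as $\rho_f^{\oplus c^W_\G(L_\Pi)}\oplus\rho_g^{\oplus c^H_\G(L_\Pi)}$, with the generic members accounting for the $\rho_f$-summands and the holomorphic members for the $\rho_g$-summands; the assignment of summand to parity is forced by matching Hodge--Tate weights---$\{b+1,a+2\}$ for the generic half, which realises $\rho_f(-b-1)$, against $\{0,a+b+3\}$ for the holomorphic half, which realises $\rho_g$---and, at the places $v\notin\Sigma$, by the $\GSp_2$-Eichler--Shimura congruence together with the factorisation $L(s,\Pi_{v,{\rm spin}})=L(s,\pi_{1,v})L(s,\pi_{2,v})$ recalled above. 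Combining this with the previous paragraph, so that only a single generic member survives over $\gamma$, gives $\gamma^\ast{\rm Gr}^W_3 H^3_{{\rm et}}(S_{\bQ},\pi_\ast\mathcal{L}^{{\rm et}}_{a,b})\cong\rho_f(-b-1)$ away from $\Sigma$; taking $L$-functions yields $L(s,\gamma^\ast{\rm Gr}^W_3 H^3_{{\rm et}}(S_{\bQ},\pi_\ast\mathcal{L}^{{\rm et}}_{a,b}))=L(s-b-1,f)$ up to the finitely many Euler factors at $\Sigma$, which is the assertion.

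The main obstacle is the third step. Weissauer's and Laumon's results describe the endoscopic contribution to ${\rm Gr}^W_3 H^3_{{\rm et}}$ only in the aggregate, and extracting unconditionally the precise claim that over $\gamma$ \emph{only} the generic half is realised, with Galois action exactly $\rho_f(-b-1)$, rather than the holomorphic half or both, requires control of $c^H_\G(L_\Pi)$ that is genuinely delicate---Section 5 already observes that $c^H_\G(L_\Pi)$ may or may not vanish in ways that depend subtly on $\G$---and ultimately rests on the relevant part of Arthur's multiplicity formula for $\GSp_2$ and on $p$-adic Hodge theory, in the manner of Section 6 of \cite{tilouine}. This, rather than any purely formal difficulty, is why the statement is posed as a conjecture.
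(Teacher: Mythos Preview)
The statement is labeled a \emph{Conjecture} in the paper and is not proved there; the surrounding text only motivates it. Your proposal is therefore not being compared against a proof but against the paper's heuristic, and on that score it matches closely: the paper also (i) reduces to $\Pi$ being a weak endoscopic lift of $(\pi_1,\pi_2)$ via \cite{Onote}, \cite{W}, \cite{Sc}, \cite{So}; (ii) invokes the conjectural divisibility $L_\Sigma(s,\pi_2)^{c_\G^H(L_\Pi)}L_\Sigma(s,\pi_1)^{c_\G^W(L_\Pi)}\mid L_\Sigma(s,{\rm Gr}^W_3 H^3_{{\rm et}})$ coming from Arthur's conjecture and $p$-adic Hodge theory \cite{tilouine}; and (iii) reads the hypothesis \eqref{eqn:conjhtp} as forcing the holomorphic contribution over $\gamma$ to vanish and the generic one to have multiplicity one. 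Your identification of the surviving $2$-dimensional piece with $\rho_f(-b-1)$ via Hodge--Tate weights is exactly the content of Remark~1.6(iii) in the paper, which observes that the Frobenius eigenvalues on $\gamma^\ast{\rm Gr}^W_3 H^3_{{\rm et}}$ are of the form $p^{b+1}\alpha$ with $\alpha\in\Z_\ell$, so $L(s,g)$ cannot contribute.

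You are also right about where the genuine gap lies: the paper makes no claim to control unconditionally which member of $L_\Pi$ is realised over $\gamma$, and explicitly flags (in the bullet points preceding the conjecture and in Section~5) that $c_\G^H(L_\Pi)$ behaves subtly with $\G$. So your final paragraph correctly diagnoses why this remains a conjecture rather than a theorem.
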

We should explain where the missing contribution of 
$\pi_g \ot \chi^i$ is gone in our case. 
Recall the Galois covering $S_{\Gamma(11)}$ of 
$\mathcal{A}^{{\rm lev}}_{11}$. 
We can construct a holomorphic weak endoscopic lift 
$\Pi' _i= \Th((\pi_f \ot \chi^i) \bt (\pi_g \ot \chi^i))$ 
associated to a pair $(\pi_f\ot\chi^i, \pi_g \ot \chi^i)$ such that 
$\Pi'_i$ has a right $\G(11)_{\A}$-invariant vector which contributes to $(3,0)$-part of 
Gr$^W_3 H^3(S_{\G(11)},\C)$, hence $|L_{\Pi'}| =2$.
The construction of $\Pi'_i$ is given by the first author \cite{okazaki}. 
If we expect Arthur's conjecture (cf. Section 6 of 
\cite{tilouine} and also \cite{hiraga}), $\Pi'_i$ should contribute to the 
$\ell$-adic cohomology of $S_{\Gamma(11)}$ of the middle degree. 
\begin{remark}$($i$)$
The results of this paper can be viewed as a warning that the good behaviour predicted if $\G$ is torsion free may not occur in some case 
like $\K(11)^{{\rm lev}}$, due to the specific geometry of the corresponding Siegel threefold $\mathcal{A}^{{\rm lev}}_{11}$.  
However we know fortunately that 
$\mathcal{A}^{{\rm lev}}_{11}$ is unirational. 
So we can discuss Theorem 1.3-(ii). 
In general, it seems to be hard to do by using (purely) geometric arguments. 

\noindent
$($ii$)$ 
If $\G$ is $inadmissible$ (see Section 5 for the definition and in fact 
$\G=\K(N)$ or $\K(N)^{{\rm lev}}$ for 
any $N$ are such examples), then there are no contribution 
of holomorphic weak endoscopic lifts (hence Yoshida lifts) to 
(3,0)-part of ${\rm Gr}^W_3 H^3(S_{\G},\C)$. 
We will discuss this in Section 5 

\noindent
$($iii$)$ Let $g$ be the elliptic cusp form of weight $a+b+4$
 associated to $\pi_2$. Since the Frobenius eigenvalues at $p\neq \ell$ 
 on $\gamma^\ast{\rm Gr}^W_w H^3_{{\rm et}}(S_{\bQ},\mathcal{L}_{a,b}^{\rm et})$ are of form $p^{b+1}\alpha$ for some $\alpha\in \Z_\ell$, we see that 
 the L-function of $g$ never contribute to 
 $\gamma^\ast{\rm Gr}^W_w H^3_{{\rm et}}(S_{\bQ},\mathcal{L}_{a,b}^{\rm et})$. 

\end{remark}

The paper is organized as follows. 
In section 2, we determine the $L$-function of Klein's cubic hypersurface 
by using theory of Fano threefolds and motives. 
So we will freely use the terminology in \cite{manin} (see also \cite{neko} 
for a modern article).
In section 3, we construct non-holomorphic differential forms $F_i$ on 
$\mathcal{A}^{{\rm lev}}_{11}$. 
In section 4, we give a proof of Theorem 1.3 and discuss a related topic in 
Section 5. 

\section{Klein's cubic threefold and its $L$-function.} 
In this section, we compute $L$-function of Klein's cubic threefold 
$X$ defined by $x_0^2x_1+x^2_1x_2+x^2_2x_3+x^2_3x_4+x^2_4x_0=0$ in 
the projective space 
$\mathbb{P}^4$ where we fix coordinates $[x_0:x_1:x_2:x_3:x_4]$. 
We consider $X$ as a variety defined over $\Q$. 
\begin{prop} The variety $X$ has a good reduction outside 11.
\end{prop}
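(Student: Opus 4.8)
The plan is to show that the cubic threefold $X\colon x_0^2x_1+x_1^2x_2+x_2^2x_3+x_3^2x_4+x_4^2x_0=0$ is smooth over $\mathbb{Z}[1/11]$, i.e.\ that the reduction $X_{\F_p}$ is a smooth hypersurface in $\mathbb{P}^4_{\F_p}$ for every prime $p\neq 11$. Since smoothness of a projective hypersurface $\{F=0\}$ over a field is equivalent to the vanishing locus of the partial derivatives $\partial F/\partial x_0,\dots,\partial F/\partial x_4$ being empty in $\mathbb{P}^4$ (because $F$, being homogeneous of degree $3$, lies in the ideal generated by the partials up to a nonzero constant in characteristic $\neq 3$), the statement reduces to a computation with the discriminant (resultant) of the five partials. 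Concretely, I would compute the partials
\[
\partial_0 F = 2x_0x_1 + x_4^2,\quad \partial_1 F = x_0^2 + 2x_1x_2,\quad \partial_2 F = x_1^2 + 2x_2x_3,\quad \partial_3 F = x_2^2+2x_3x_4,\quad \partial_4 F = x_3^2 + 2x_4x_0,
\]
and show that over $\bar{\Q}$ their common zero locus in $\mathbb{P}^4$ is empty; equivalently, the discriminant $\mathrm{disc}(F)\in\Z$ of the cubic form $F$ is, up to sign, a power of $11$ times a power of $2$ and $3$. The well-known computation (already implicit in the classical theory of Klein's cubic, going back to Klein and Hirzebruch, and recorded e.g.\ in Gross--Popescu and Roulleau) gives $\mathrm{disc}(F) = \pm 11^{a}$ for a suitable exponent $a$, with no factor of $2$ or $3$; granting this, $X$ has good reduction at every $p\notin\{11\}$, and one checks separately that the reductions mod $2$ and mod $3$ are still smooth by exhibiting the empty common zero locus directly in those characteristics.

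In more detail, the key steps are as follows. First, set up the Jacobian ideal $J=(\partial_0F,\dots,\partial_4F)$ and argue that a common projective zero $[x_0:\cdots:x_4]$ over $\bar{\F}_p$ cannot have any coordinate equal to $0$: if, say, $x_4=0$, then $\partial_0F=0$ forces $x_0x_1=0$, and chasing the cyclic relations $\partial_jF=0$ around the pentagon $0\to1\to2\to3\to4\to0$ forces successively more coordinates to vanish until the whole point is $0$, a contradiction. Hence we may normalize, say $x_0=1$, and the equations $\partial_jF=0$ become an explicit system of four polynomial equations in $x_1,x_2,x_3,x_4$ with $\bar{\F}_p$-coefficients. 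Eliminating variables one at a time (using the cyclic symmetry to keep the bookkeeping manageable) reduces the system to a single one-variable polynomial whose constant — a resultant — is, up to sign, a product of powers of $2$, $3$, and $11$. One then verifies by direct substitution that for $p=2$ and $p=3$ the system still has no solution in $\bar{\F}_p$, so the only bad prime is $11$.

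The main obstacle is the elimination/resultant computation: carrying the five cyclically-arranged quadratic partials down to a single univariate polynomial and correctly identifying the prime factors of its leading and constant coefficients is where all the real work lies, and it is easy to lose a factor of $2$ or $3$ if one is careless about characteristic. I expect this to be handled either by a clean invariant-theoretic evaluation of $\mathrm{disc}(F)$ exploiting the $\Z/5$-cyclic symmetry (and, in fact, the larger automorphism group $\mathrm{PSL}_2(\F_{11})$ of Klein's cubic, which severely constrains which primes can divide the discriminant), or simply by citing the value of the discriminant from \cite{g&p} or \cite{rou}. A secondary, purely routine point is to record that $\mathrm{char}=3$ causes no degeneracy in the "$F\in J$" reduction — here one checks $F$ itself vanishes at any common zero of the partials by Euler's identity $3F=\sum x_j\partial_jF$, which is vacuous mod $3$, so for $p=3$ one instead uses the direct no-solution check mentioned above. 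Once the discriminant is pinned down, the proposition is immediate.
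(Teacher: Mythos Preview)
Your approach is correct and is exactly the ``direct computation'' the paper alludes to in its opening sentence but deliberately sets aside in favor of a different argument. The paper instead exhibits an explicit generically finite surjective rational map from the degree-$11$ Fermat hypersurface $F_{11}:\sum_{i=0}^4 y_i^{11}=0$ to $X$, given by $(y_i)\mapsto (y_i^4 y_{i+1}^2 y_{i+2}^3 y_{i+3}^8)_{i\in\Z/5\Z}$, and deduces good reduction of $X$ outside $11$ from the well-known good reduction of $F_{11}$ (and of the indeterminacy locus) there.

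Your Jacobian route is more elementary and entirely self-contained: the cyclic elimination you outline does work (with $x_0=1$ one finds $x_1^{11}=1/4$ from $\partial_4F=0$ and $x_1^{11}=-1/128$ from $\partial_0F=0$, forcing $p\mid 2^2\cdot 3\cdot 11$), and the residual primes $p=2,3$ are disposed of exactly as you say---in characteristic $2$ the partials reduce to $x_{i-1}^2$ with no common projective zero, and in characteristic $3$ the unique common zero $[1{:}1{:}1{:}1{:}1]$ of the partials satisfies $F=5\neq 0$, so $X_{\F_3}$ is smooth. The paper's Fermat-cover argument is slicker and computation-free, trading the whole discriminant calculation for a single verified morphism; on the other hand the step ``generically finite map from a variety with good reduction implies the target has good reduction'' is invoked rather briskly there, whereas your argument makes the smoothness of each fiber completely explicit.
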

\begin{proof} We can easily check this by direct computation, but 
we give another proof. 
Let $F_{11}:\displaystyle\sum_{i=0}^4 y^{11}_i=0$ be 
the Fermat hypersurface of degree 11 in $\mathbb{P}^4_\Q$. 
Then we have the generically finite, surjective morphism 
$F_{11}\longrightarrow X, (y_i)_i\mapsto (x_i)_i=(y^4_iy^2_{i+1}y^3_{i+2}y^8_{i+3})_{i\in\Z/5\Z}$ which is defined over $\Z$. 
Clearly $F_{11}$ and the indeterminacy of the map as 
above have good reduction outside 11, hence 
so is $X$. 
\end{proof}

Let $H^\ast_{{\rm dR}}(Y)$ be the algebraic 
de Rham cohomology of a variety $Y$ (may be affine or singular) over a field $K$ 
which is defined to be 
the hypercohomology group of de Rham complex 
$$\Omega^\cdot_Y:=[0\longrightarrow \mathcal{O}_Y
\stackrel{d}{\longrightarrow} \Omega^1_Y
\stackrel{d}{\longrightarrow} \Omega^2_Y 
\longrightarrow\cdots]$$
(see \cite{gro}). It is a $K$-vector space by definition. 
If $Y$ is affine, this coincides with 
the cohomology of the global sections. 
In general, it is hard to compute 
the algebraic de Rham cohomology. However, now 
$X$ is a hypersurface of projective space. 
So we can apply Griffiths-Dwork's results to 
compute an explicit generators of $H^3_{{\rm dR}}(X)$.    

We now explain this. 
Put $S:=x_0^2x_1+x^2_1x_2+x^2_2x_3+x^2_3x_4+x^2_4x_0$ and 
denote by $R=\Q[x_0,x_1,x_2,x_3,x_4]$ the polynomial ring over $\Q$ 
with five variables and $R_d$ the set of all homogeneous 
polynomial of degree $d\in \Z_{\ge 0}$. 
Consider $U:=\mathbb{P}^4\setminus X$. Then $U$ is an affine variety 
over $\Q$ and 
it has coordinate ring $\Gamma(U,\mathcal{O}_U)$ which 
consists of the homogenous elements of degree 0 in 
$R[\frac{1}{S}]$. 

\begin{theorem}(\cite{griffiths}) There exists an isomorphism 
$H^3_{{\rm dR}}(X)\simeq H^4_{{\rm dR}}(U)$ as $\Q$-vector spaces. 
Furthermore, this isomorphism commutes with the action of 
${\rm Aut}(X)\cap {\rm Aut}(\mathbb{P}^4)$.
\end{theorem}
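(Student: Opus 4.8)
The plan is to obtain the isomorphism from the Gysin--residue long exact sequence attached to the smooth divisor $X\subset\mathbb{P}^4$ with open complement $U$, combined with the elementary computation of the de Rham cohomology of $\mathbb{P}^4$. For a smooth hypersurface in a smooth projective variety over a field of characteristic $0$ there is a functorial long exact sequence
\[
\cdots\lra H^{k-2}_{{\rm dR}}(X)\stackrel{{\rm Gys}}{\lra}H^{k}_{{\rm dR}}(\mathbb{P}^4)\stackrel{j^\ast}{\lra}H^{k}_{{\rm dR}}(U)\stackrel{{\rm Res}}{\lra}H^{k-1}_{{\rm dR}}(X)\stackrel{{\rm Gys}}{\lra}H^{k+1}_{{\rm dR}}(\mathbb{P}^4)\lra\cdots,
\]
where $j\colon U\hookrightarrow\mathbb{P}^4$ is the open immersion, ${\rm Gys}$ is the Gysin (Thom) morphism, and ${\rm Res}$ is the Poincar\'e residue, computed on the subcomplex of $j_\ast\Omega^\cdot_U$ consisting of forms with at most logarithmic poles along $X$. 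I would take this sequence from \cite{griffiths}, or alternatively deduce it from the localization triangle for algebraic de Rham cohomology together with purity for the smooth pair $(\mathbb{P}^4,X)$ (one may also base change to $\mathbb{C}$ and invoke the topological Gysin sequence under the comparison isomorphism). The essential input, and indeed the only real content here, is that the residue is given by an explicit rational formula on differential forms, so that the whole sequence, and in particular ${\rm Res}$, is defined over $\Q$; once this is granted the rest is purely formal.

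Next I would run the bookkeeping with $H^\ast_{{\rm dR}}(\mathbb{P}^4)$. One has $H^{2i+1}_{{\rm dR}}(\mathbb{P}^4)=0$ and $H^{2i}_{{\rm dR}}(\mathbb{P}^4)=\Q\cdot h^i$ for $0\le i\le 4$, with $h$ the class of a hyperplane. Since $X$ is a cubic, the Gysin map $H^0_{{\rm dR}}(X)=\Q\lra H^2_{{\rm dR}}(\mathbb{P}^4)=\Q\cdot h$ carries the fundamental class to $[X]=3h$ and is therefore an isomorphism; by exactness $j^\ast$ vanishes on $H^2_{{\rm dR}}(\mathbb{P}^4)$, so $j^\ast h=0$, hence $j^\ast h^2=0$, i.e.\ $j^\ast\colon H^4_{{\rm dR}}(\mathbb{P}^4)\lra H^4_{{\rm dR}}(U)$ is the zero map. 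Taking $k=4$ in the sequence and using $H^3_{{\rm dR}}(\mathbb{P}^4)=H^5_{{\rm dR}}(\mathbb{P}^4)=0$, the relevant segment is
\[
H^4_{{\rm dR}}(\mathbb{P}^4)\stackrel{j^\ast}{\lra}H^4_{{\rm dR}}(U)\stackrel{{\rm Res}}{\lra}H^3_{{\rm dR}}(X)\stackrel{{\rm Gys}}{\lra}H^5_{{\rm dR}}(\mathbb{P}^4)=0,
\]
so ${\rm Res}$ is surjective with kernel ${\rm im}(j^\ast)=0$, whence ${\rm Res}\colon H^4_{{\rm dR}}(U)\iso H^3_{{\rm dR}}(X)$ is an isomorphism of $\Q$-vector spaces. (One may note in passing that Lefschetz gives $H^i_{{\rm dR}}(X)\cong H^i_{{\rm dR}}(\mathbb{P}^4)$ for $i\le 2$, so that $H^3_{{\rm dR}}(X)$ is entirely primitive and no correction term intervenes; this is why the residue isomorphism takes such a clean form for the threefold $X$.)

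Finally, for the equivariance assertion I set $G:={\rm Aut}(X)\cap{\rm Aut}(\mathbb{P}^4)$, the group of projective linear automorphisms of $\mathbb{P}^4$ stabilizing $X$. Each $g\in G$ restricts to an automorphism of $U$ and of $X$ compatible with $j$ and with the closed immersion of $X$, and the Gysin--residue sequence is functorial for such triples of compatible automorphisms, because pullback of rational differential forms commutes with taking the Poincar\'e residue. Therefore ${\rm Res}$ intertwines the induced $G$-actions on $H^4_{{\rm dR}}(U)$ and $H^3_{{\rm dR}}(X)$, which is exactly the second assertion. The main obstacle, as noted above, is the clean construction of the residue sequence over $\Q$ with ${\rm Res}$ identified as an operation on differential forms; everything after that is a formal consequence of the structure of $H^\ast_{{\rm dR}}(\mathbb{P}^4)$ and the fact that $\deg X=3$ is invertible.
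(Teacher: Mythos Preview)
Your argument is correct and is essentially a fleshed-out version of what the paper does: the paper's two-line proof invokes ``the excision theorem'' (i.e.\ the localization/Gysin sequence for the smooth pair $(\mathbb{P}^4,X)$, as in \cite{griffiths}) and ``functoriality of cohomology'' for the equivariance, which is exactly your strategy. Your additional bookkeeping---that $j^\ast h=0$ because $[X]=3h$ generates $H^2_{{\rm dR}}(\mathbb{P}^4)$, hence $j^\ast$ vanishes on $H^4_{{\rm dR}}(\mathbb{P}^4)$---makes explicit the step that guarantees the residue map is injective (not merely surjective), which the paper leaves to the reader.
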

\begin{proof} This follows from the excision theorem and 
the later claim follows from the functoriality of cohomology. 
\end{proof}

So we have only to compute $H^4_{{\rm dR}}(U)$ instead of 
$H^3_{{\rm dR}}(X)$. 
Put $\Omega=\sum_{i=0}^4 x_i dx_0\wedge \cdots \wedge 
\hat{dx_i} \wedge \cdots dx_4$. 

\begin{theorem} The cohomology $H^4_{{\rm dR}}(U)$ 
consists of $\displaystyle\frac{x_i \Omega}{S^2},\frac{\partial_i S 
\Omega}{S^3}$ and $\displaystyle\frac{x_i \Omega}{S^2}$ gives the 
Hodge filtration Fil$^2H^3_{{\rm dR}}(X)$. 
\end{theorem}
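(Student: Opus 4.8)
The plan is to use Griffiths--Dwork theory on the hypersurface $X \subset \mathbb{P}^4$ defined by $S = x_0^2x_1+x_1^2x_2+x_2^2x_3+x_3^2x_4+x_4^2x_0$. By Theorem 2.3 we are reduced to computing $H^4_{{\rm dR}}(U)$ for $U = \mathbb{P}^4 \setminus X$. The rational differential forms of top degree on $\mathbb{P}^4$ with pole along $X$ are spanned by $\dfrac{P\,\Omega}{S^{k}}$ with $P \in R_{3k-5}$ (the degree condition making the form homogeneous of degree $0$), and Griffiths' pole-order reduction says that modulo exact forms one can reduce $k$ whenever the numerator $P$ lies in the Jacobian ideal $\mathfrak{J} = (\partial_0 S, \dots, \partial_4 S)$. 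Since $X$ is smooth (Proposition 2.1 shows good reduction outside $11$; over $\mathbb{Q}$ one checks the partials have no common projective zero), the Jacobian ring $R/\mathfrak{J}$ is Artinian Gorenstein with socle in degree $5(3-2) = 5$, and its graded pieces in degrees $0,1,2,3$ have dimensions $1,5,5,1$ respectively --- these are precisely the primitive Hodge numbers $h^{3,0}, h^{2,1}, h^{1,2}, h^{0,3}$ recorded in (\ref{eqn:hstK}) (here $h^{3,0} = \dim (R/\mathfrak{J})_0 = 1$ would be the coefficient, but for the \emph{primitive} cohomology of a cubic threefold the relevant count starts one step in; in any case the Griffiths isomorphism identifies $\operatorname{Fil}^{3-k+1} H^3_{{\rm prim}} / \operatorname{Fil}^{3-k+2}$ with $(R/\mathfrak{J})_{3k-5}$).

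Concretely, I would carry out the argument in the following steps. First, identify the pole-order-$2$ forms: these are $\dfrac{P\,\Omega}{S^2}$ with $P \in R_1$, a $5$-dimensional space with basis $\dfrac{x_i\,\Omega}{S^2}$, $0 \le i \le 4$; since $R_1 \cap \mathfrak{J} = 0$ (the partials $\partial_i S$ have degree $2$), none of these reduces, so they give $5$ linearly independent classes, and by Griffiths' theorem they span $\operatorname{Fil}^2 H^3_{{\rm dR}}(X)$ (the ``bottom'' piece of the Hodge filtration on the degree-$3$ cohomology, matching $h^{2,1}=5$, with $h^{3,0}=0$ so there is no deeper step). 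Second, handle pole order $3$: forms $\dfrac{P\,\Omega}{S^3}$ with $P \in R_4$; here $\dim R_4 = \binom{8}{4} = 70$ while $\dim \mathfrak{J}_4 = 5\cdot\dim R_2 - (\text{Koszul relations in degree }4)$. Since $X$ is smooth the Koszul complex on $(\partial_i S)$ is exact in the relevant range, so $\dim \mathfrak{J}_4 = 5 \cdot 15 - \binom{5}{2} = 75 - 10 = 65$, giving $\dim (R/\mathfrak{J})_4 = 5$, matching $h^{1,2}=5$. One checks the $5$ forms $\dfrac{\partial_i S\,\Omega}{S^3}$ are linearly independent modulo the image of lower pole order plus exact forms --- equivalently that $\partial_0 S, \dots, \partial_4 S$ are linearly independent in $(R/\mathfrak{J})_?$... more precisely one exhibits them as a basis of a complementary $5$-dimensional space, e.g.\ by pairing against the socle via the Grothendieck residue. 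Third, observe there is no pole-order-$4$ contribution: $\dim (R/\mathfrak{J})_7 = 0$ since the socle of $R/\mathfrak{J}$ sits in degree $5$, so every form with pole order $\ge 4$ reduces. Totalling, $\dim H^4_{{\rm dR}}(U) = 5 + 5 = 10$, consistent with the degree of the $L$-factor.

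The main obstacle --- and the only part requiring genuine computation rather than general theory --- is verifying the linear independence claims, i.e.\ that the listed forms genuinely survive in the quotient. For pole order $2$ this is immediate. For pole order $3$ the cleanest route is to compute the Jacobian ring $R/\mathfrak{J}$ of this specific cubic explicitly: the relations $\partial_i S = 0$ read $2x_{i-1}x_i + x_{i+1}^2 = 0$ cyclically (indices mod $5$), from which one derives by hand that $(R/\mathfrak{J})_1$ is spanned by $x_0,\dots,x_4$ (dimension $5$), $(R/\mathfrak{J})_2$ has dimension $5$ (the $15$ monomials $x_ix_j$ modulo the $5$ quadratic relations, which are visibly independent, and one checks no further relations appear in degree $2$), and then multiplication $(R/\mathfrak{J})_1 \times (R/\mathfrak{J})_1 \to (R/\mathfrak{J})_2$ together with the Gorenstein duality forces the dimensions $1,5,5,1$ in degrees $0$ through $3$ with everything zero in degree $\ge 6$. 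Since $\{\partial_i S\}_{i}$ spans $\mathfrak{J}_2$ and $\mathfrak{J}_2$ has dimension $5$ inside the $15$-dimensional $R_2$, and since the reduction of a pole-$3$ form with numerator in $\mathfrak{J}$ lands in pole order $2$ with numerator in $R_1$, one reads off that the ``non-reducible'' pole-$3$ classes are exactly those with numerator not in $\mathfrak{J}$, a $5$-dimensional complement --- and $\{\partial_i S \cdot (\text{linear})\}$ being the wrong shape, one instead fixes a basis of the $5$-dimensional $(R/\mathfrak{J})_4$ (dually, the degree-$1$ piece) and transports it. In fact, by Gorenstein duality $(R/\mathfrak{J})_4 \cong (R/\mathfrak{J})_1^\vee$, and one can take the $5$ forms $\dfrac{x_i^4\,\Omega}{S^3}$, or any lift of a basis of $(R/\mathfrak{J})_4$; the statement as written lists $\dfrac{\partial_i S\,\Omega}{S^3}$, so the point to check is that these $5$ forms, which a priori might lie in lower pole order, are in fact independent modulo exact forms and span the pole-$3$ graded piece --- this follows once one confirms $\partial_0 S,\dots,\partial_4 S$ are linearly independent in $R_2$ (clear, as they involve disjoint leading monomials) and that no $\mathbb{Q}$-linear combination of them lies in... but $\mathfrak{J}_2$ is spanned by them, so instead the correct reading is that the author is describing a spanning set for $H^4_{{\rm dR}}(U)$ as a whole, the $x_i\Omega/S^2$ giving $\operatorname{Fil}^2$ and the $\partial_i S\,\Omega/S^3$ giving representatives of the remaining $(1,2)$-classes; I would verify the latter span by the residue-pairing computation above. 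All of this is a finite, explicit linear-algebra calculation over $\mathbb{Q}$ and presents no conceptual difficulty beyond bookkeeping.
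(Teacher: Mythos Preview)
Your approach---Griffiths--Dwork reduction, organizing the computation via the Jacobian ring $R/\mathfrak{J}$, and invoking the a~priori Hodge numbers---is exactly what the paper does, only spelled out: the paper's proof compresses all of this into the phrase ``by direct computation, we can find generators as in the claim'' together with the remark that one already knows $\dim H^3_{\mathrm{dR}}(X)=10$ and $\dim\operatorname{Fil}^2=5$. Your treatment of the pole-order-$2$ piece (the classes $x_i\Omega/S^2$ spanning $\operatorname{Fil}^2$) and your dimension counts $\dim(R/\mathfrak{J})_1=\dim(R/\mathfrak{J})_4=5$, $(R/\mathfrak{J})_{\ge 6}=0$, are correct and clean.

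The genuine gap is in your handling of the pole-order-$3$ representatives, and you visibly sense it without pinning it down. The issue is elementary: $\partial_i S$ has degree $2$, so $\dfrac{\partial_i S\,\Omega}{S^3}$ has total degree $2+5-9=-2$, not $0$; it is simply not a rational top-form on $U$ under the description the paper itself gives (numerators must lie in $R_{3k-5}$, i.e.\ $R_4$ for $k=3$). Worse, even ignoring degrees, $\partial_i S$ generates $\mathfrak{J}$, so any such form would be \emph{exact} by the very pole-reduction identity you are using. Your paragraph oscillates between ``check they are independent in $(R/\mathfrak{J})_?$'', ``they span $\mathfrak{J}_2$'', and ``the correct reading is\dots'', without ever landing. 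What you should do is state plainly that the listed pole-$3$ forms cannot be the intended ones, and then supply an honest basis of the complementary $5$-dimensional piece: any lift of a basis of $(R/\mathfrak{J})_4$ works, and the Gorenstein duality $(R/\mathfrak{J})_4\cong(R/\mathfrak{J})_1^\vee$ you already invoked makes it easy to exhibit one (for instance the classes of $x_i^4$, or of the products $x_i\partial_j S$ chosen to be independent mod $\mathfrak{J}_4$). The paper's own proof does not address this point either---it just says ``by direct computation''---so the discrepancy is in the statement, not in your method; but your write-up needs to resolve it rather than talk around it.
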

\begin{proof} Since $U$ is affine, 
$H^4_{{\rm dR}}(U)={\rm Ker}(d:\Omega^4_U\longrightarrow \Omega^5_U=0)/
{\rm Im}(d:\Omega^3_U\longrightarrow \Omega^4_U)$. 
Furthermore right hand side can be written as 
$$\Big\{\frac{A\Omega}{S^i}\Big|\ A\in R_{3i-5},\ i=2,3,\cdots  \Big\}\Big/
\Big\{\partial_j\Big(\frac{A}{S^i}\Big)\Omega \Big|\ 
j=0,1,2,3,4,\ A\in R_{3i-4},\ i=2,3,
\cdots  \Big\}.$$
By direct computation, we can find generators as in the claim. 
We know a priori that $H^3_{{\rm dR}}(X)$ (resp. 
Fil$^2H^3_{{\rm dR}}(X)$) is 
of dimension 10 (resp. 5)). So this 
would help us from abstract computations.

By \cite{griffiths}, 
Fil$^2H^3_{{\rm dR}}(X)$ corresponds to 
the image of 
$\Big\{\ds\frac{A\Omega}{S^{i+1}}\Big|\ A\in R_{3i-2}, i=1,2,\cdots  \Big\}$ 
in $H^4_{{\rm dR}}(U)$. 
Then we have  the last claim with computations above. 

\end{proof}

Let $\zeta_5$ be a primitive 5-th root of unity.  
\begin{prop} Let $\alpha$ be the automorphism on $X$ 
defined by $[x_0:x_1:x_2:x_3:x_4]\mapsto 
[x_1:x_2:x_3:x_4:x_0]$ ($\alpha$ is of order five) 
and $\alpha^\ast$ be the 
corresponding linear map on $H^3_{{\rm dR}}(X)$. 
Then $H^3_{{\rm dR}}(X)\otimes \Q(\zeta_5)$ 
decomposes as $\bigoplus_{i=0}^4W(\zeta^i_5)$ 
where $\alpha^\ast$ acts on 2-dimensional space 
$W(\zeta^i_5)$ over $\Q(\zeta_5)$ as multiplication $\zeta^i_5$. 
Furthermore, this decomposition preserves Hodge filtration. 
\end{prop}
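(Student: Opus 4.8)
The plan is to diagonalise $\alpha^\ast$ explicitly on the concrete model for $H^3_{{\rm dR}}(X)$ furnished by Theorems 2.2 and 2.3. Since $\alpha$ is a linear automorphism of $\mathbb{P}^4$ carrying $X$ to itself, it lies in ${\rm Aut}(X)\cap{\rm Aut}(\mathbb{P}^4)$, so by the equivariance in Theorem 2.2 the operator $\alpha^\ast$ on $H^3_{{\rm dR}}(X)$ is identified, via the Griffiths isomorphism, with the operator on $H^4_{{\rm dR}}(U)$ induced by the substitution $x_i\mapsto x_{i+1}$, indices in $\Z/5\Z$. As $\alpha$ has order $5$, the minimal polynomial of $\alpha^\ast$ divides $T^5-1$, which is separable over $\Q(\zeta_5)$; hence $\alpha^\ast$ is diagonalisable over $\Q(\zeta_5)$ with eigenvalues among $\zeta_5^0,\dots,\zeta_5^4$, and it remains to pin down the multiplicities and their position relative to the Hodge filtration.

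For the multiplicities I would compute the action on the generators of Theorem 2.3. One has $\alpha^\ast S=S$ and, writing $\partial_iS=2x_ix_{i+1}+x_{i-1}^2$, also $\alpha^\ast(\partial_iS)=\partial_{i+1}S$. Moreover $\alpha^\ast\Omega=\Omega$: since $\alpha$ is linear, $\alpha^\ast\Omega=c\,\Omega$ for some $c\in\Q$, and $c^5=1$ because $\alpha^\ast$ has order $5$ on forms, forcing $c=1$ (equivalently, $\Omega$ is the Euler form, on which $\alpha$ acts through the determinant of the permutation matrix of a $5$-cycle, namely $+1$). Hence $\alpha^\ast$ carries $\omega_i:=\dfrac{x_i\Omega}{S^2}$ to $\omega_{i+1}$ and $\eta_i:=\dfrac{\partial_iS\,\Omega}{S^3}$ to $\eta_{i+1}$. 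By Theorem 2.3 together with the Hodge numbers \eqref{eqn:hstK} (which give $\dim_{\Q}H^3_{{\rm dR}}(X)=10$), the ten classes $\omega_0,\dots,\omega_4,\eta_0,\dots,\eta_4$ form a basis, so over $\Q(\zeta_5)$ the operator $\alpha^\ast$ is two copies of the regular representation of $\Z/5\Z$; therefore each $\zeta_5^i$ is an eigenvalue of multiplicity exactly $2$. Concretely I would take $W(\zeta_5^i)$ to be the $\zeta_5^i$-eigenspace, spanned over $\Q(\zeta_5)$ by $v_i:=\sum_{j\in\Z/5\Z}\zeta_5^{-ij}\omega_j$ and $w_i:=\sum_{j\in\Z/5\Z}\zeta_5^{-ij}\eta_j$, for which $\alpha^\ast v_i=\zeta_5^iv_i$ and $\alpha^\ast w_i=\zeta_5^iw_i$ by a one-line reindexing.

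For the Hodge filtration: by Theorem 2.3, ${\rm Fil}^2H^3_{{\rm dR}}(X)$ is spanned by $\omega_0,\dots,\omega_4$, hence is $\alpha^\ast$-stable, and after extending scalars it contains each $v_i$; since the $v_i$ are linearly independent (a Vandermonde/discrete-Fourier change of basis), ${\rm Fil}^2\otimes\Q(\zeta_5)=\bigoplus_{i=0}^4\Q(\zeta_5)v_i=\bigoplus_{i=0}^4\big({\rm Fil}^2\cap W(\zeta_5^i)\big)$. Because \eqref{eqn:hstK} forces ${\rm Fil}^3=0$ and ${\rm Fil}^1={\rm Fil}^2$, every step of the Hodge filtration is respected by the decomposition $\bigoplus_iW(\zeta_5^i)$, which completes the argument. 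There is no real obstacle here beyond bookkeeping: the one point deserving care is the normalisation $\alpha^\ast\Omega=\Omega$, and the only inputs one must be confident about are that the ten displayed classes genuinely form a basis (Theorem 2.3 plus the dimension count) and that the Griffiths isomorphism intertwines the geometric and the substitution actions (Theorem 2.2).
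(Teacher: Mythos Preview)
Your argument is correct and follows the same route as the paper, which simply writes down the eigenvectors $v_j=\sum_i\zeta_5^{j(i+1)}x_i\Omega/S^2$ and $\omega_j=\sum_i\zeta_5^{j(i+1)}\partial_iS\,\Omega/S^3$ and observes that ${\rm Fil}^2\cap W(\zeta_5^i)=\langle v_i\rangle$. One small slip to fix: from $h^{0,3}=0$ one gets ${\rm Fil}^1={\rm Fil}^0$ (the whole space), not ${\rm Fil}^1={\rm Fil}^2$; this is harmless since the full space trivially respects the eigenspace decomposition, so your conclusion stands.
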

\begin{proof} By Theorem 2.3, it is easy to see that 
$$v_j=\sum_{i=0}^4 \zeta^{j(i+1)}_5 \frac{x_i\Omega}
{S^2},\ 
\omega_j=\sum_{i=0}^4 \zeta^{j(i+1)}_5 \frac{\partial_i S\Omega}
{S^3},\ j=0,1,2,3,4$$ is a generator of $W(\zeta^i_5)$. 
Since Fil$^2 H^3_{{\rm dR}}(X)\cap W(\zeta^i_5)=\langle v_i \rangle $, 
this decomposition preserves Hodge filtration. 
\end{proof}

\begin{theorem} Let $f$ be the elliptic modular form of weight 2 
with complex multiplication by the ring of 
integers of $\Q(\sqrt{-11})$. Then we have 
$L(s,H^3_{{\rm et}}(X_{\bQ},\Q_\ell))=\prod_{i=0}^4 L(s-1,f\otimes 
\chi^i)$ including  local $L$-factor at 11 where $\chi:(\Z/11\Z)^\ast\longrightarrow \C^\times, 
\overline{2}\mapsto \zeta_5$ 
is a primitive character of order 5 with conductor 11. In particular, 
 $L(s,H^3_{{\rm et}}(X_{\bQ},\Q_\ell))$ is independent to a choice of $\ell$. 
\end{theorem}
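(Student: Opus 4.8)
The plan is to exploit the order-five automorphism $\alpha$ of Proposition 2.4, which is defined over $\Q$, to split $H^3_{{\rm et}}(X_{\bQ},\Q_\ell)$ into five two-dimensional Galois pieces, and then to identify each piece with the representation attached to $f\ot\chi^i$ by first proving that the whole motive $H^3(X)$ has complex multiplication by $\Q(\zeta_{11})$. First, since $\alpha\in{\rm Aut}(X)$ is $\Q$-rational, the operator $\alpha^\ast$ on $H^3_{{\rm et}}(X_{\bQ},\Q_\ell)$ commutes with ${\rm Gal}(\bQ/\Q)$, and by the de Rham comparison and Proposition 2.4 its characteristic polynomial is $\prod_{i=0}^4(T-\zeta^i_5)^2$. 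Extending scalars to $\Q_\ell(\zeta_5)$ then gives a ${\rm Gal}(\bQ/\Q)$-stable decomposition
\[
H^3_{{\rm et}}(X_{\bQ},\Q_\ell)\ot_{\Q_\ell}\Q_\ell(\zeta_5)=\bigoplus_{i=0}^4 W_{\ell,i},\qquad \alpha^\ast|_{W_{\ell,i}}=\zeta^i_5 ,
\]
into two-dimensional $\Q_\ell(\zeta_5)$-subspaces, the $\ell$-adic analogue of $\bigoplus_i W(\zeta^i_5)$. Since characteristic polynomials of Frobenius are multiplicative along direct sums, $L(s,H^3_{{\rm et}}(X_{\bQ},\Q_\ell))=\prod_{i=0}^4 L(s,W_{\ell,i})$, so it is enough to show that $W_{\ell,i}$ realises, at each place $\lambda\mid\ell$ of $\Q(\zeta_5)$, the Galois representation $\rho_{f\ot\chi^i,\lambda}(-1)$, including the Euler factor at $11$; this also yields the asserted independence of $\ell$.

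Second, I would prove that $H^3(X)$ has complex multiplication by $\Q(\zeta_{11})$. The five-dimensional intermediate Jacobian $J(X)$, for which $H^3_{{\rm et}}(X_{\bQ},\Q_\ell)(1)\simeq H^1_{{\rm et}}(J(X)_{\bQ},\Q_\ell)$, is an abelian variety over $\Q$ carrying an action of $\Z[\zeta_{11}]$ generated by $\alpha$ and by the order-eleven automorphism $x_i\mapsto\zeta_{11}^{(-2)^i}x_i$ of $X$; since $[\Q(\zeta_{11}):\Q]=10=2\dim J(X)$, this forces $J(X)$ to be of CM type with multiplication by $\Q(\zeta_{11})$. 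By N\'eron--Ogg--Shafarevich (Proposition 2.1) and the theory of CM abelian varieties, $H^3_{{\rm et}}(X_{\bQ},\Q_\ell)(1)\simeq{\rm Ind}_{\Q(\zeta_{11})}^{\Q}\psi$ for an algebraic Hecke character $\psi$ of $\Q(\zeta_{11})$ of weight one, unramified outside $11$; the algebraicity of $\psi$ and the fact that the CM field is $\Q(\zeta_{11})$ can also be read off from the Fermat domination of Proposition 2.1 (the motive $H^3(X)$ being cut by a $\Q$-rational correspondence from that of a resolution of $F_{11}$, whose constituents are CM motives of $\Q(\zeta_{11})$ in the sense of Weil), consistently with the circulant exponent matrix $2I+P$, $P^5=I$, of $S$ having determinant $2^5+1=3\cdot 11$ (the order-three part contributes nothing to $H^3$, as such a contribution would have CM by $\Q(\sqrt{-3})$ and bad reduction at $3$, contradicting Proposition 2.1). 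Finally, the reducibility of ${\rm Ind}_{\Q(\zeta_{11})}^{\Q}\psi$ into the five two-dimensional pieces above, together with $\psi$ having weight one (so ${\rm Ind}\,\psi$ is not abelian), forces by Mackey's criterion the stabiliser of $\psi$ to be ${\rm Gal}(\Q(\zeta_{11})/K)$ with $K=\Q(\sqrt{-11})$, whence $\psi=\psi_K\circ N_{\Q(\zeta_{11})/K}$ for an algebraic Hecke character $\psi_K$ of $K$ of weight one.

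Third, I would descend and untwist. The unique quadratic subfield of $\Q(\zeta_{11})$ is $K=\Q(\sqrt{-11})$, and ${\rm Gal}(\Q(\zeta_{11})/K)\cong\Z/5\Z$ is cut out by $\chi\circ N_{K/\Q}$, the restriction to $G_K$ of the Dirichlet character $\chi$. Induction in stages together with the twisting identity ${\rm Ind}_K^{\Q}(\psi_K\cdot(\chi^i\circ N_{K/\Q}))={\rm Ind}_K^{\Q}(\psi_K)\ot\chi^i$ give
\[
H^3_{{\rm et}}(X_{\bQ},\Q_\ell)(1)\simeq\bigoplus_{i=0}^4\big({\rm Ind}_K^{\Q}\psi_K\big)\ot\chi^i ,
\]
the $i$-th summand being $W_{\ell,i}$ after matching the $\alpha^\ast$-eigenvalues with the characters of ${\rm Gal}(\Q(\zeta_{11})/K)$. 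It then remains to identify ${\rm Ind}_K^{\Q}\psi_K$ with $\rho_{f,\lambda}(1)$: $\psi_K$ is of weight one, matching the conductor of ${\rm Ind}_K^{\Q}\psi_K$ (equivalently the conductor of $J(X)$) with the level $11^2$ of $f$ forces $\psi_K$ to be ramified exactly at $\sqrt{-11}$ with conductor $(\sqrt{-11})$, and a comparison at a single split rational prime then identifies $\psi_K$ with the gr\"o{\ss}encharacter of $f$. Hence $W_{\ell,i}\simeq\rho_{f\ot\chi^i,\lambda}(-1)$ for all $\ell$, at least away from $11$, which already yields $L(s,H^3_{{\rm et}}(X_{\bQ},\Q_\ell))=\prod_{i=0}^4 L(s-1,f\ot\chi^i)$ up to the local factor at $11$.

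Finally, the Euler factor at $11$, which I expect to be the main obstacle. On the automorphic side $L_{11}(s,f\ot\chi^i)=1$ for every $i$: $\chi$ and $\psi_K$ are both ramified at the ramified prime $\sqrt{-11}$ of $K/\Q$, so $\rho_{f\ot\chi^i,\lambda}$ restricted to a decomposition group at $11$ is induced from a ramified character of the ramified quadratic extension $K_{\sqrt{-11}}/\Q_{11}$ and has no inertia invariants; the statement at $11$ thus reduces to $H^3_{{\rm et}}(X_{\bQ},\Q_\ell)^{I_{11}}=0$. As $X$ has genuinely bad reduction at $11$ — where the Fermat covering is wildly ramified — I would establish this either by extracting the inertia action at $11$ from the known $\ell$-adic cohomology of the Fermat threefold modulo $11$, on which the relevant Jacobi-sum Hecke characters of $\Q(\zeta_{11})$ have positive conductor exponent above $11$ and so contribute no invariant line, or by computing directly that $J(X)$ has conductor $11^{10}$ from an explicit regular or semistable model of $X$ over $\Z_{11}$, where the de Rham data of Theorem 2.3 and $p$-adic Hodge theory are at hand. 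A secondary technical point is to make the identifications precise: pinning down $\psi_K$ as the gr\"o{\ss}encharacter of $f$ (and not a twist) through the conductor computation, and --- if one argues through the Fermat domination --- selecting the ten $\Q(\zeta_{11})$-characters belonging to $H^3(X)$ itself rather than to the blow-up contributions of the resolution of $F_{11}\dashrightarrow X$ (whose bad locus contains positive-genus Fermat curves), which can be done using the residual $\mu_{11}$-action on $X$ or the explicit de Rham generators of Theorem 2.3 and Proposition 2.4.
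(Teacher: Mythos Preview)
Your route is genuinely different from the paper's and, in principle, more elementary. The paper passes to the Albanese $A$ of the Fano surface of lines on $X$, invokes Adler--Ramanan to get $A\simeq E^5$ over $\C$, splits $A\sim B\times B'$ over $\Q$ using the idempotent attached to $\alpha$, and then identifies the $\Q$-simple four-dimensional factor $B'$ as a Shimura abelian variety $A_h$ by appealing to Ribet's $\GL_2$-type criterion together with the full strength of Serre's modularity conjecture (Khare--Wintenberger) and a result of Gonz\'alez--Lario, after which an explicit Euler-factor computation at $p=3$ pins down $h=f\otimes\chi$; the factor at $11$ is then read off from local--global compatibility (Carayol, Saito). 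Your CM-by-$\Q(\zeta_{11})$ strategy would bypass Serre's conjecture entirely, which is attractive and is in fact close to what Roulleau does in the reference the paper cites.

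There are, however, two genuine gaps in your sketch. First, the automorphisms $\alpha$ and $\beta:x_i\mapsto\zeta_{11}^{(-2)^i}x_i$ do \emph{not} commute in ${\rm Aut}(X)$: one checks $\alpha\beta\alpha^{-1}=\beta^{k}$ with $k\not\equiv 1\pmod{11}$, so they generate a nonabelian Frobenius group of order $55$, not $\Z[\zeta_{11}]$. The fix is to use $\beta$ alone: a Lefschetz fixed-point count ($\beta$ fixes exactly the five coordinate points of $X$) gives ${\rm Tr}(\beta^\ast\mid H^3)=-1$, whence $\beta^\ast$ has minimal polynomial $\Phi_{11}$ and $\Q[\beta^\ast]\simeq\Q(\zeta_{11})$. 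Second, and more seriously, the implication ``CM by $\Q(\zeta_{11})$ $\Rightarrow$ $H^3(X)(1)\simeq{\rm Ind}_{\Q(\zeta_{11})}^{\Q}\psi$'' is not automatic: since $\beta$ is defined only over $\Q(\zeta_{11})$, one has $\beta^\ast\notin{\rm End}_{\Q}(J(X))$, so the complex multiplication is not defined over $\Q$, and in general an abelian variety over $\Q$ with merely geometric CM by a field $F$ need not have Tate module of the form ${\rm Ind}_F^{\Q}\psi$. What saves you here is the extra structure $\sigma(\beta)=\beta^{j(\sigma)}$ for $\sigma\in{\rm Gal}(\overline{\Q}/\Q)$, which makes the subalgebra $\Q[\beta^\ast]$ Galois-stable and forces $G_{\Q}$ to permute the ten $\beta^\ast$-eigenlines via the mod-$11$ cyclotomic character; carrying this out carefully does yield the induced form, but it is precisely this analysis that is missing. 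Once these points are repaired your Mackey descent to $K=\Q(\sqrt{-11})$ goes through, though pinning $\psi_K$ to the gr\"o{\ss}encharacter of $f$ (and not a twist by the quadratic character of $K$) still needs either a conductor computation or a check at one small prime, analogous to the paper's computation at $p=3$.
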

\begin{proof} Let $E$ be the elliptic curve over $\Q$ 
which corresponds to $f$. 
Let $S$ be the Hilbert scheme of lines of $X$ which is 
a smooth surface over $\Q$. Then 
by the general theory of Fano threefold (cf.\cite{b&sw}, \cite{manin}), 
the Grothendieck motive $M:=h^3(X)$ associated to $X$ over $\Q$ coincides 
with the motives $h^1(A)(1)$ over $\Q$ associated to the Albanese variety $A$ of $S$ 
where $``(1)"$ means the twist by 
Lefschetz motive. Note that 
$X$ has the Chow-K\"unneth decomposition by 
\cite{manin}. The motive $h^3(X)$ in fact exists in 
the category of Grothendieck motives. 
So we have $$L(s,H^3_{{\rm et}}(X_{\bQ},\Q_\ell))=
L(s,M)=L(s,h^1(A)(1))=L(s-1,H^1_{{\rm et}}(A_{\bQ},\Q_\ell)).$$

It is known by Theorem (46.22) in \cite{adler}  that 
as abelian varieties, 
$A$ is isomorphic to $E^5$ over $\C$. 
Recall $\alpha$ is an automorphism of order 5 defined in Proposition 2.4. 
By functoriality, $\alpha$ is identified with an element 
in ${\rm End}_\C(A)\otimes \Q={\rm End}_\C(E^5)\otimes\Q=
 {\rm M}_5({\rm End}_\C(E) \otimes \Q)= {\rm M}_5(K)$. 
Since $\alpha$ is of order five, this must be a permutation of 
order five such as 
\[
\begin{bmatrix}
0 & 1 & 0 & 0 & 0 \\
0 & 0 & 1 & 0 & 0 \\
0 & 0 & 0 & 1 & 0 \\
0 & 0 & 0 & 0 & 1 \\
1 & 0 & 0 & 0 & 0
\end{bmatrix} \in {\rm M}_5(\Q(\sqrt{-11})).
\]

Therefore, we see that $\alpha-1\in {\rm End}_\Q(A)\otimes\Q$ is a 
non-trivial zero 
divisor and then $B:=A/(\alpha-1)A$ is a one dimensional 
abelian variety over $\Q$. In fact, by Proposition 2.4, 
the filtration of de Rham realization $H^3_{{\rm dR}}(X)_\Q$ of $M$ has 
only one dimensional eigenspace (as a $\Q$-vector space) for eigenvalue 1 of 
$\alpha^\ast$. 
Since $B$ has CM by $\mathcal{O}=\mathcal{O}_K$, $\sharp\mathcal{O}^\times=2$ 
and $B$ has conductor 
a power of 11 by Proposition 2.1, then 
$B$ is isogenous to $E$ or the twist of $E$ by $K$ over $\Q$. 
The latter case becomes $E$ again. 

Consider the quotient abelian variety $B'=A/B$. 
By direct computation, 
we have the local $L$-factor at 3 of 
$X$ and hence of $B\times B'$ up to Tate twists:
\[\begin{array}{rl}
L_3(s,H^3_{{\rm et}}(X_{\bQ},\Q_\ell))=&  (1 + 3x + 27x^2)\times \\
  &(1 - 3x - 18x^2 + 135x^3+
81x^4 +3645x^5 - 13122x^6 - 59049x^7 + 531441x^8),
\end{array}
\]
where $x=3^{-s}$.
Since the second factor of the right hand side is 
irreducible as a polynomial over $\Q$, 
we see that $B'$ is a $\Q$-simple abelian variety.   
We denote by ${\rm End}_\Q(B')$ the ring of endomorphisms defined 
over $\Q$ of $B'$. This is a $\Z$-algebra.
Consider the composite of the following homomorphisms:  
 $$L=\Q(\zeta_5) 
\stackrel{\tiny{\zeta_5\mapsto \alpha}}{\hookrightarrow} {\rm End}_\Q(A)\otimes_\Z\Q=
{\rm End}_\Q(B)\otimes_\Z\Q\times{\rm End}_\Q(B')\otimes_\Z\Q\longrightarrow 
 {\rm End}_\Q(B')\otimes_\Z\Q,$$
where the second homomorphism is the natural projection.  
Since ${\rm End}_\Q(B)\otimes_\Z\Q=\Q$, this map gives an embedding  
$L\hookrightarrow {\rm End}_\Q(B')\otimes_\Z\Q$. 
Then $B'$ is an abelian variety of $\GL_2$-type in the sense of Ribet 
\cite{ribet} and Theorem 4.4 loc.cit 
with Serre's conjecture which is now a theorem by \cite{k&w}, 
$B'$ is isogenous to the Shimura's abelian variety $A_h$ 
for some elliptic modular form $h$ (see Theorem 7.14 of \cite{shimura} for $A_h$) . 

On the other hand, $B'_{\overline{\Q}}$ is isogenous to 
$E^4$ over $\overline{\Q}$. 
Note that $B'$ has good reduction outside 11 by Proposition 2.1. 
Then by Theorem 1.2 in \cite{g&l}, we may assume that 
 $B'_{\overline{\Q}}$ is isogenous to 
$E^4$ over a number field $K$ included in $\Q(\mu_{11^\infty})$. 
Since Gal$(K/\Q)$ is abelian, by taking Weil restriction,  
we must have $h=f\otimes \psi$ for some primitive character $\psi$. 
Note that $L=\Q(a_n(h)|n\ge 1)$ by Theorem 7.14 of \cite{shimura}. 
So we may have $\psi=\chi$. 
Hence we have 
$$L(s,H^3_{{\rm et}}(X_{\bQ},\Q_\ell))=
L(s-1,H^1_{{\rm et}}(A_{\bQ},\Q_\ell))=
\prod_{i=0}^4 L(s-1,H^1_{{\rm et}}(E_{\bQ},\Q_\ell)\otimes \chi^i)
=\prod_{i=0}^4 L(s-1,f\otimes \chi^i).$$
For the local $L$-factor at 11, 
the last equality follows from the local-global compatibility of 
automorphic $L$-function (cf. \cite{carayol},\cite{saito}). 
In particular, the LHS is independent to $\ell$.  
\end{proof}

%%%%%%%%%%%%%%%%%%%%%%%%%%%%%%%%%%%%%%%%%%%%%%%%%%%%%%%%%%%%%%%%%%%%%%%%%%%%%%%%%%  REVISED   %%%%%%%%%%%%%%%%%%%%%%%%% %%%%%%%%%%%%%%%%%%%%%%%%%%%%%%%%%%%%%%%%%%%%%%%%%%%%%%%%%%%%%%%%
\section{construction of right $\K(11)_{\A}^{\rm lev}$-invariant cusp forms.}
For $k = \Q,\Q_v$, or $\A$, let 
\[
\GSp_n(k) = \Big\{g \in \GL_{2n}(k) \ \Big |\ {}^tg
\begin{bmatrix}
 0_n& -1_n\\
1_n &0_n
\end{bmatrix}g = c(g) 
\begin{bmatrix}
0_n & -1_n \\
1_n & 0_n
\end{bmatrix},\ c(g)\in k^\times \Big\}
\]
where $c(g)$ is the similitude norm of $g$.
Note that $\GSp_1(k) \simeq \GL_2(k)$.
For a representation $\tau$ of $\GSp_n(k)$ and a quasi-character $\l$, we will denote by $\l\tau$ the representation sending $g$ to $\l(c(g))\tau(g)$.  
For a positive integer $N$, the paramodular groups $\K(N)$ and $\K(N)^{{\rm lev}}$ with a 
canonical level structure are defined by 
\begin{eqnarray*}
\K(N) &=& \left\{\begin{bmatrix}
\Z & \Z& \Z & N\Z \\
N\Z & \Z & N\Z & N\Z \\
\Z & \Z & \Z & N\Z \\
\Z & N^{-1}\Z & \Z & \Z
\end{bmatrix}\right\} \cap \Sp_2(\Q), \\
\K(N)^{{\rm lev}} &=&
\left\{\begin{bmatrix}
\Z & \Z& \Z & N\Z \\
N\Z & 1+N\Z & N\Z & N^2\Z \\
\Z & \Z & \Z & N\Z \\
\Z & \Z & \Z & 1+ N\Z
\end{bmatrix}\right\} \cap \Sp_2(\Q). \label{eqn:paralev}
\end{eqnarray*}
For a nonarchimedean place $v$ of $\Q$, let $\K(N)_v$ and $\K(N)^{{\rm lev}}_v$ be the $v$-adic completions of $\K(N)$ and $\K(N)^{{\rm lev}}$.
Let $\K(N)_\A = \prod_{v < \i} \K(N)_v$ and $\K(N)^{{\rm lev}}_\A = \prod_{v < \i} \K(N)_v^{\rm lev}$. 
In this section, put $p=11$.
Let $\chi = \ot \chi_v$ be a primitive character of $\Q^{\t} \bs \A^{\t}$ of order $5$ with conductor $p$.
Let $f \in S_2(\G_0(11^2))$ be the elliptic CM modular form.
Let $\mu$ be the gr\"o{\ss}encharacter of $K = \Q(\sqrt{-11})$ associated to $f$.
Let 
\[
\nu = \ds\frac{\mu}{|\mu|}
\]
and $\pi_1= \pi(\nu)$ be the irreducible cuspidal automorphic representation of $\PGL_2(\A)$ associated to $\nu$. 
We will construct right $\K(p)_{\A}^{\rm lev}$-invariant non-holomorphic automorphic forms corresponding to non-holomorphic differential forms on $H^{2,1}({\rm Gr}^W_3 H^3_{{\rm betti}}(\mathcal{A}^{{\rm lev}}_{1,11},\C))$.
Let $\Pi$ be an irreducible cuspidal automorphic representation of GSp$_2(\A)$ which arises from a non-holomorphic differential form on $H^{2,1}({\rm Gr}^W_3 H^3_{{\rm betti}}(\mathcal{A}^{{\rm lev}}_{1,11},\C))$.
By the similar argument done before Conjeceture \ref{conj:hodge1type} and Tilouine's conjectural panorama of the occurence of 
automorphic forms in the Hodge decomposition of Siegel threefolds 
(see It\^{o} \cite{ito} or Tilouine, section 6 of \cite{tilouine}), we guess that $\Pi$ is a weak endoscopic lift of a pair $(\chi^i\pi_1,\chi^i\pi_2)$ (see eq.(2) at p. 505 of \cite{o&s} for the case of $a=b=0$).
Here $\pi_2$ is some irreducible cuspidal automorphic representation of $\PGL_2(\A)$ with $\pi_{2,\infty}$ being the discrete series representation of lowest weight $4$. 
We can assume that the central character of $\Pi$ is trivial, since a weak endoscoic lift of $(\chi^i\pi_1,\chi^i\pi_2)$ is a $\chi^i$-twist of that of $(\pi_1,\pi_2)$.
By Roberts \cite{Ro}, every weak endoscopic lift is given by a global $\th$-lift from $\GSO_{\B}(\A)$ for some quaternion algebra $\B$ defined over $\Q$.
We recall some fundamental results on the $\th$-lifts.
Define the action $\rho$ of $\B^{\t} \t \B^{\t}$ on $\B$ by $\rho(h_1,h_2)x = h_1^{-1}xh_2$, which yields an isomorphism 
\[
i_{\rho}: \GSO(\B) \simeq \B^{\t} \t \B^{\t}/\Delta(\Q^{\t})
\]
where $\Delta$ indicates the diagonal embedding.  
Let $\pi_i^{\B}$ be the Jacquet-Langlands transfer of $\pi_i$ to $\B(\A)^{\t}$ if it exists.
By $i_{\rho}$, a pair of automorphic representations $(\pi_1^{\B},\pi_2^{\B})$ of $\B(\A)^{\t}$ is identified with an automorphic representation of $\GSO(\B(\A))$.
We denote by $\Th(\pi_1^{\B} \bt \pi_2^{\B})$ the global $\th$-lift and by $\th(\pi_{1,v}^{\B} \bt \pi_{2,v}^{\B})$ the local $\th$-lift, which is the $v$-component of $\Th(\pi_1^{\B} \bt \pi_2^{\B})$.
If $\B = {\rm M}_2(\Q)$, we write $\Th(\pi_1 \bt \pi_2)$ as $\Th(\pi_1^{\B} \bt \pi_2^{\B})$ and $\th(\pi_{1,v} \bt \pi_{2,v})$ as $\th(\pi_{1,v}^{\B} \bt \pi_{2,v}^{\B})$, briefly.
Then, 
\begin{eqnarray*}
\th(\pi_{1,v}^{\B} \bt \pi_{2,v}^{\B}) \simeq \th(\pi_{1,v} \bt \pi_{2,v}) \ \ \  (\Longleftrightarrow) \ \ \ \B_v \simeq {\rm M}_2(\Q_v).
\end{eqnarray*}
First, we should find a candidate of $\B$ and $\pi_2$ such that $\Pi = \Th(\pi_1^{\B} \bt \pi_2^{\B})$.
If $\B_{\i}$ is not split, then $\th(\pi_{1,\i}^{\B} \bt \pi_{2,\i}^{\B})$ is holomorphic.
Hence, $\B_{\i}$ should be split.
At a nonarchimedean place $v$, $\th(\pi_{1,\i}^{\B} \bt \pi_{2,\i}^{\B})$ is unramified, if and only if $\B_v$ is split and both of $\pi_{1,v}, \pi_{2,v}$ are unramified.
Hence $\pi_{2,v}$ is unramified and $\B_v$ is split for $v \neq p$ since $\Pi_v$ is unramified. 
Therefore, we conclude that $\B = {\rm M}_2(\Q)$ by the Hasse principle, and $\pi_2$ should have a $p$-power conductor.  
Moreover, we know that the local $\th$-lift $\th(\pi_{1,p} \bt \pi_{1,p})$  is a constituent (denoted by $\tau(S,\pi_{1,p})$ in \cite{Ro-Sch}) of the local Klingen parabolically induced representation $1 \rtimes \pi_{1,p}$, and have a right $\G_p$-invariant vector for a relatively large compact subgroup $\G_p \subset \Sp_2(\Q_{p})$.
Further, $\pi(\nu^3)_{p} \simeq \pi(\nu)_{p}$ by the following lemma. Therefore, we choose $\pi_{2} = \pi(\nu^3)$.
\begin{lemma}\label{lem:supB}
The $p$-component $\pi_{1,p} = \pi(\nu)_{p}$ of $\pi_1$ is supercuspidal and $\pi(\nu)_{p} = \pi(\nu^3)_{p}$.
\end{lemma}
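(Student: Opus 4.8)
The plan is to realize $\pi_1 = \pi(\nu)$ as the automorphic induction of $\nu$ from $K = \Q(\sqrt{-11})$ and reduce both assertions to a computation with the local character $\nu_{\mathfrak{p}}$ of $K_{\mathfrak{p}}^{\times}$, where $\mathfrak{p} = (\sqrt{-11})$ is the unique prime of $K$ above $p = 11$ and $K_{\mathfrak{p}}/\Q_p$ is the ramified quadratic extension. Since $f$ has complex multiplication by $\mathcal{O}_K$, is of level $11^2$, and $|d_K| = 11$, the gr\"o{\ss}encharacter $\mu$ has conductor $\mathfrak{p}$; hence $\nu_{\mathfrak{p}}$ is trivial on $1 + \mathfrak{p}$ (conductor exponent $1$), and under the local Langlands correspondence $\pi_{1,p}$ is the two-dimensional induced representation $\mathrm{Ind}_{K_{\mathfrak{p}}/\Q_p}(\nu_{\mathfrak{p}})$, while $\pi(\nu^3)_p = \mathrm{Ind}_{K_{\mathfrak{p}}/\Q_p}(\nu_{\mathfrak{p}}^3)$. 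Writing $\sigma$ for the nontrivial element of $\mathrm{Gal}(K_{\mathfrak{p}}/\Q_p)$, the lemma then amounts to showing $\nu_{\mathfrak{p}}^{\sigma} \neq \nu_{\mathfrak{p}}$ (equivalently, that this induced representation is irreducible, hence supercuspidal) and $\nu_{\mathfrak{p}}^3 = \nu_{\mathfrak{p}}^{\sigma}$.

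First I would record three facts about $\nu_{\mathfrak{p}}$. (a) $\nu_{\mathfrak{p}}(-1) = -1$: apply the product formula $\prod_v \nu_v(-1) = 1$ to the unit $-1 \in \mathcal{O}_K^{\times}$, using that $\nu_{\infty}(-1) = -1$ (infinity type $(1,0)$) and that $\nu_v(-1) = 1$ at every finite $v \nmid 11$. (b) $\nu_{\mathfrak{p}}|_{\Q_p^{\times}} = \epsilon_{K_{\mathfrak{p}}/\Q_p}$, the quadratic character of $\Q_p^{\times}$ cutting out $K_{\mathfrak{p}}$: the central character of $\pi(\nu)$ is $\epsilon_{K/\Q}\cdot(\nu|_{\A_{\Q}^{\times}})$ and is trivial because $\pi_1$ is a representation of $\PGL_2(\A)$, so $\nu|_{\A_{\Q}^{\times}} = \epsilon_{K/\Q}$, and one localizes at $p$. (c) $\nu_{\mathfrak{p}}$ has order exactly $2$ on $\mathcal{O}_{K,\mathfrak{p}}^{\times}$, and $\nu_{\mathfrak{p}}(\sqrt{-11})^2 = -1$: indeed $\mathcal{O}_{K,\mathfrak{p}}^{\times}/(1+\mathfrak{p}) \cong \mathbb{F}_{11}^{\times}$ is the image of $\Z_p^{\times}$ under reduction (the residue field does not grow in a totally ramified extension), and on $\Z_p^{\times}$ the character $\nu_{\mathfrak{p}}$ restricts, by (b), to the order-two character $\epsilon_{K_{\mathfrak{p}}/\Q_p}|_{\Z_p^{\times}}$; moreover $\nu_{\mathfrak{p}}(\sqrt{-11})^2 = \nu_{\mathfrak{p}}(-11) = \nu_{\mathfrak{p}}(-1)\,\epsilon_{K_{\mathfrak{p}}/\Q_p}(11) = -1$, since $11 = N_{K_{\mathfrak{p}}/\Q_p}(\sqrt{-11})$ lies in $\ker \epsilon_{K_{\mathfrak{p}}/\Q_p}$.

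Then both parts follow quickly. For supercuspidality: $\sigma$ fixes $\Q_p$ and sends the uniformizer $\sqrt{-11}$ to $-\sqrt{-11}$, so by (a), $\nu_{\mathfrak{p}}^{\sigma}(\sqrt{-11}) = \nu_{\mathfrak{p}}(-1)\,\nu_{\mathfrak{p}}(\sqrt{-11}) = -\nu_{\mathfrak{p}}(\sqrt{-11}) \neq \nu_{\mathfrak{p}}(\sqrt{-11})$, so $\nu_{\mathfrak{p}} \neq \nu_{\mathfrak{p}}^{\sigma}$ and the induced representation is irreducible, hence $\pi_{1,p}$ is supercuspidal. For $\pi(\nu)_p \simeq \pi(\nu^3)_p$: since $\sigma$ acts trivially on the residue field and $\nu_{\mathfrak{p}}$ is trivial on $1 + \mathfrak{p}$, one has $\nu_{\mathfrak{p}}^{\sigma} = \nu_{\mathfrak{p}}$ on $\mathcal{O}_{K,\mathfrak{p}}^{\times}$; by (c), $\nu_{\mathfrak{p}}^2 = 1$ there, so $\nu_{\mathfrak{p}}^3 = \nu_{\mathfrak{p}} = \nu_{\mathfrak{p}}^{\sigma}$ on $\mathcal{O}_{K,\mathfrak{p}}^{\times}$; and on the uniformizer $\nu_{\mathfrak{p}}^3(\sqrt{-11}) = \nu_{\mathfrak{p}}(\sqrt{-11})\,\nu_{\mathfrak{p}}(\sqrt{-11})^2 = -\nu_{\mathfrak{p}}(\sqrt{-11}) = \nu_{\mathfrak{p}}^{\sigma}(\sqrt{-11})$. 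As $\mathcal{O}_{K,\mathfrak{p}}^{\times}$ and $\sqrt{-11}$ generate $K_{\mathfrak{p}}^{\times}$, it follows that $\nu_{\mathfrak{p}}^3 = \nu_{\mathfrak{p}}^{\sigma}$, hence $\mathrm{Ind}_{K_{\mathfrak{p}}/\Q_p}(\nu_{\mathfrak{p}}^3) \simeq \mathrm{Ind}_{K_{\mathfrak{p}}/\Q_p}(\nu_{\mathfrak{p}}^{\sigma}) \simeq \mathrm{Ind}_{K_{\mathfrak{p}}/\Q_p}(\nu_{\mathfrak{p}})$, i.e.\ $\pi(\nu^3)_p \simeq \pi(\nu)_p$.

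The hard part --- the only step with real content beyond formal manipulation of induced representations --- will be fact (c): that $\nu_{\mathfrak{p}}$ has order exactly $2$ on the local units. This is exactly what forces $\nu_{\mathfrak{p}}^3$ to agree with $\nu_{\mathfrak{p}}^{\sigma}$, and it hinges both on the triviality of the central character of $\pi_1$ (giving that $\nu_{\mathfrak{p}}|_{\Q_p^{\times}}$ is the ramified quadratic character) and on a careful bookkeeping of local class field theory for $K_{\mathfrak{p}}/\Q_p$ --- in particular, that $\Z_p^{\times}$ surjects onto $\mathcal{O}_{K,\mathfrak{p}}^{\times}/(1+\mathfrak{p})$ and that $11$ is a local norm. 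Alternatively one could phrase the whole argument on the Galois side, describing $\rho_{f,\ell}|_{D_{11}}$ and $\rho_{g,\ell}|_{D_{11}}$ as the inductions from $G_{K_{\mathfrak{p}}}$ of a Hecke character and its cube, but the unit computation would be identical.
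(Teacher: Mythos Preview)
Your argument is correct, and it takes a genuinely different route from the paper's proof. For supercuspidality, the paper argues by contradiction: if $\pi(\nu)_p$ were not supercuspidal then $\nu_{\mathfrak{p}}$ would factor through $N_{K_{\mathfrak p}/\Q_p}$, and by choosing (via Dirichlet) an inert prime $l$ with $\nu_w(l)=1$ for $w\neq\mathfrak p,\infty$ one would get $L_l(s,\nu)=(1-l^{-2s})^{-1}$, contradicting the known shape of $L_l(s+\tfrac12,E)$. You instead stay entirely local: from the product formula you extract $\nu_{\mathfrak p}(-1)=-1$, hence $\nu_{\mathfrak p}^{\sigma}(\sqrt{-11})=-\nu_{\mathfrak p}(\sqrt{-11})$, so $\nu_{\mathfrak p}\neq\nu_{\mathfrak p}^{\sigma}$ and the induced Weil representation is irreducible. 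For the identity $\pi(\nu)_p\simeq\pi(\nu^3)_p$, the paper invokes the theory of complex multiplication to say that $\mu$ is $K^{\times}$-valued, hence $\nu_{\mathfrak p}|_{\mathfrak o_{\mathfrak p}^{\times}}$ takes values in $\mu(K)=\{\pm1\}$, and then observes $\nu_{\mathfrak p}(\varpi)\in\{\pm1,\pm i\}$, so $\nu_{\mathfrak p}^3\in\{\nu_{\mathfrak p},\bar\nu_{\mathfrak p}\}$. You obtain the same order-$2$ restriction on units from the triviality of the central character (giving $\nu_{\mathfrak p}|_{\Q_p^{\times}}=\epsilon_{K_{\mathfrak p}/\Q_p}$) together with the fact that $\Z_p^{\times}$ surjects onto $\mathfrak o_{\mathfrak p}^{\times}/(1+\mathfrak p)$, and then pin down $\nu_{\mathfrak p}(\sqrt{-11})^2=-1$ exactly, yielding the sharper statement $\nu_{\mathfrak p}^3=\nu_{\mathfrak p}^{\sigma}$. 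Your approach is more self-contained (no Dirichlet, no appeal to CM-value theory, no $L$-factor computation) and makes the Galois-side mechanism transparent; the paper's approach has the virtue of tying the supercuspidality directly to the arithmetic of the elliptic curve $E$.
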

\begin{proof}
Let $\p = \sqrt{-11}$, the place of $K$ lying over $p$.
If we assume that $\pi(\nu)_p$ is not supercuspidal, then, by Theorem 4.6. (iii) of \cite{JL}, we can write
\begin{eqnarray}
\nu_{\p} = \chi_{p} \circ N_{K_{\p}/\Q_p} \label{eqn:ass}
\end{eqnarray} 
by some character $\chi_p$ on $\Q_p^{\t}$.
Observing that $[\Z_p^{\t}:N_{K_{\p}/\Q_p}(\o_{\p}^{\t})]=2$ where $\o_{\p}$ is the ring of integers of $K_{\p}$ and 
\begin{eqnarray*}
(\Z_p/p^n\Z_p)^{\t} \cong \Z/p^{n-1}\Z \t \Z/(p-1)\Z,
\end{eqnarray*} 
we conclude that $(\chi_p|_{\Z_p^{\t}})^2=1$.
Take a rational prime $l$ which is inert in $K$ so that $\nu_{w}(l) =1$ at every $w \neq \p, \i$.
Note that there are infinitely many such primes by Dirichlet's arithmetic progression theorem.
Then, $\nu_{\p}(l)=\chi_p(N_{K_{\p}/\Q_p}(l))=\chi_p(l^2)=1$.
Therefore, $L(s,\nu)_l=(1-l^{-2s})^{-1}$, which conflicts to the form of the $l$-factor of $L(s +\tfrac{1}{2},E)$.
Hence $\pi(\nu)_p$ is supercuspidal.

By the theory of complex multiplication, $\mu = \ot_w \mu_w$ takes the values in $K^{\t}$.
Therefore $\nu_{\p}|_{\o_{\p}^{\t}}$ is $\pm 1$-valued since the units group of $K^{\t}$ is $\{ \pm 1 \}$.
Thus $\nu_{\p}|_{\o_{\p}^{\t}} = \nu_{\p}^3|_{\o_{\p}^{\t}}$, and $\nu_{\p}(\v_{\p}) = \pm 1$ or $\pm \sqrt{-1}$ for a uniformizer $\v_{\p}$ of $K_{\p}$.
Hence $\nu_{\p}$ coincides with $\nu_{\p}^3$ or $\ol{\nu}_{\p}^3$.
In any cases,  
\[
\pi_{1,p} = 
\pi(\nu_{\p}) = \pi(\nu_{\p}^3)= \pi(\ol{\nu}_{\p}^3) = \pi_{2,p}.
\]
Here note that central characters of $\pi_1,\pi_2$ are trivial.
This completes the proof.
\end{proof}
\begin{Rem}
By the above argument and Lemma 3.1, the $L$-packet of the weak endoscopic lift associated to $(\pi(\nu),\pi(\nu^3))$ is $\{\Th(\pi(\nu) \bt \pi(\nu^3)), \Th(\pi(\nu)^{\D} \bt \pi(\nu^3)^{\D}) \}$, where $\D$ is the definite quaternion algebra which is not split at only $\i$ and $p$.
\end{Rem}
We should note that if $\Pi$ is a weak endoscopic lift of $(\chi^i \pi_1,\chi^i \pi_2)$, the spinor $L$-function of $\Pi$ has the factor $L(s,\chi^i\pi_2)$ which does not appear in the $L$-function of $X$ (see Theorem 1.3-(ii)). 

Next, we are going to construct a right $\K(p)_{\A}^{\rm lev}$-invariant Whittaker function of $\Th(\pi_1 \bt \pi_2)$. 
Let 
\begin{eqnarray}
H(k) &=& \GL_2(k)^2 \nonumber \\
H^1(k) &=& \{(h_1,h_2) \in H(k) \mid \det(h_1) =\det(h_2) \} \label{def:H^1}
\end{eqnarray}
for $k = \Q, \Q_v$ or $\A$.
We will identify elements of $H(k)$ with those of $\GSO_{{\rm M}_2(k)} = \GSO_{2,2}(k)$ via $i_{\rho}$. 
Let 
\begin{eqnarray*}
e_{1} = \begin{bmatrix}
0 & \frac{1}{p} \\
0 & 0
\end{bmatrix}, \ 
\a = \begin{bmatrix}
\frac{1}{p} &0  \\
0 & -\frac{1}{p}
\end{bmatrix} \in {\rm M}_2(\Q).
\end{eqnarray*}
Let $Z_{(e_1,\a)}(k) \subset H^1(k)$ be the pointwise stabilizer subgroup of $e_{1},\a$, which is isomorphic to 
\begin{eqnarray*}
\bigg\{\big(\begin{bmatrix}
1 & x \\
0 & 1
\end{bmatrix}, 
\begin{bmatrix}
1 & -x \\
0 & 1
\end{bmatrix}\big) \mid x \in k \bigg\}.
\end{eqnarray*} 
We fix the standard additive character $\psi = \ot_v \psi_v$ on $\Q \bs \A$.
Let $f_i$ be an automorphic form in $\pi_i$.
Let $r_v$ be the Weil represetation of $\Sp_2(\Q_v) \t {\rm O}_{2,2}(\Q_v)$ with respect to $\psi_v$ on $\Sc({\rm M}_2(\Q_v)^2)$, the space of Schwartz-Bruhat functions of ${\rm M}_2(\Q_v)^2$.
Then an automorphic form $\th(\vp,f_1 \bt f_2)$ in $\Th(\pi_1 \bt \pi_2)$ is
\begin{eqnarray*}
\th(\vp,f_1 \bt f_2)(g) = \int_{H^1(\Q)\bs H^1(\A)} \sum_{x \in {\rm M}_2(\Q)^2} (\ot_v r_v(g,h)\vp_v(x)) f_1(h_1)f_2(h_2) dh_1dh_2
\end{eqnarray*}
where $\vp_v \in \Sc({\rm M}_2(\Q_v)^2)$ and $d h_1,dh_2$ are Haar measures.
Let $W_i = \ot_v W_{i,v}$ be the global Whittaker function with respect to $\psi$ of $f_i$.
Then, the $v$-component of the standard Whittaker function of $\th(\vp,f_1 \bt f_2)$ is
\begin{eqnarray}
W_v(g) = \int_{Z_{(e_1,\a)}(\Q_v) \bs H^1(\Q_v)} r_v(g,h)\vp_v(e_{1},\a) W_{1,v}(h_1)W_{2,v}(h_2) dh_1dh_2. \label{eqn:W3}
\end{eqnarray}
It is easy to see $W_{\i}(1) \neq 0$ (see Remark \ref{rem:arcsch} for an explicit $\vp_{\i}$).
Let $\vp_v = {\rm Ch}({\rm M}_2(\Z_v)^2)$ for every nonarchimedean place $v \neq p$, where ${\rm Ch}$ indicates the characteristic function.
Then, it is also easy to $W_v(1) \neq 0$, since both of $\pi_{1,v}, \pi_{2,v}$ are unramified and we can take a right $\GL_2(\Z_v)$-invariant $W_{i,v}$.
Consequently, what we have to do is to choose $\vp_{p}$ and $W_{i,p}$ suitably for the realization of a nontrivial right $\K(p)^{\rm lev}$-invariant $W_{p}$.  
Let
\begin{eqnarray*}
\b = W_{1,p}^{\rm new} = W_{2,p}^{\rm new}
\end{eqnarray*}
be the new vector of $\pi_{1,p} \simeq \pi_{2,p}$, which is right $\G_0(p^2)_p$-invariant. 
We can assume $\b(1) = 1$. 
Then it holds 
\begin{eqnarray}
\b(\begin{bmatrix}
a &0  \\
0 & 1
\end{bmatrix}) = 
\begin{cases}
1 & \mbox{if $a \in \Z_{p}^{\t}$} \\
0 & \mbox{otherwise}
\end{cases} \label{eqn:spcusp2}
\end{eqnarray}
since $\pi_{1,p}$ is supercuspidal by Lemma \ref{lem:supB}.
We define  
\begin{eqnarray*}
\vp_{p}^{{\rm lev}}(x_1,x_2)= {\rm Ch} \Big(
\begin{bmatrix}
\Z_p & p^{-1}\Z_p \\
p\Z_p & \Z_p
\end{bmatrix} \oplus 
\begin{bmatrix}
p^{-1}\Z_p^{\t} & p^{-1}\Z_p \\
p\Z_p & p^{-1}\Z_p^{\t}
\end{bmatrix} \Big).
\end{eqnarray*}
By using the properties of the Weil representation in p. 256 of \cite{Ro}, we can check that 
\begin{eqnarray}
r_p(u,(h_1,h_2)) \vp_p^{{\rm lev}} = \vp_p^{{\rm lev}} \label{eqn:vpK-inv}
\end{eqnarray}
for $u \in \K(p)_p^{{\rm lev}}$ and $(h_1,h_2) \in ((\G_0(p^2)_p \t \G_0(p^2)_p) \cap H^1(\Q_p))$.
Now then, let us calculate $W_p(1)$ using (\ref{eqn:W3}).
Let 
\begin{eqnarray*}
\tilde{\G} &=& \begin{bmatrix}
p^2 &0  \\
0 & 1
\end{bmatrix}{\rm GL}_2(\Z_p)\begin{bmatrix}
p^{2} &0  \\
0 & 1
\end{bmatrix}^{-1} \\
&=& 
\begin{bmatrix}
\Z_p & p^{-2}\Z_p \\
p^2\Z_p & \Z_p
\end{bmatrix} \cap {\rm GL}_2(\Q_p) \simeq {\rm GL}_2(\Z_p).
\end{eqnarray*}
As a complete system of representatives for $\tilde{\G}/\G_0(p^2)_p$, we can take
\begin{eqnarray*}
\bigg\{
\begin{bmatrix}
1 & j \\
0 &1 
\end{bmatrix} \mid j \in p^{-2}\Z /\Z \bigg\} \sqcup \bigg\{\begin{bmatrix}
0 & -p^{-1} \\
p & 0
\end{bmatrix}
\begin{bmatrix}
1 & j \\
0 &1 
\end{bmatrix} \mid j \in p^{-1}\Z /\Z \bigg\}.
\end{eqnarray*}
Therefore, as a system of complete representatives of 
$Z_{(e_1,\a)}(\Q_{p}) \bs H^1(\Q_p) /\big(\G_0(p^2)_p \t 
\G_0(p^2)_p \big)$ we can take the following. \\
TYPE I):\begin{eqnarray*}
\bigg(p^r
\begin{bmatrix}
1 & x \\
0& 1
\end{bmatrix}
\begin{bmatrix}
p^m & 0\\
0 & 1
\end{bmatrix}, 
\begin{bmatrix}
p^n & 0 \\
  0 & 1 
\end{bmatrix}
\bigg)
\end{eqnarray*}
with $x \in \Q_p$, $m+2r =n$. \\
TYPE II):\begin{eqnarray*}
\bigg(p^r
\begin{bmatrix}
1 & x \\
0 & 1 
\end{bmatrix}
\begin{bmatrix}
p^m & 0 \\
0   & 1
\end{bmatrix}
\begin{bmatrix}
0   & -1 \\
p^2 & 0
\end{bmatrix}
\begin{bmatrix}
1 & s \\
0 & 1 
\end{bmatrix},
\begin{bmatrix}
p^n & 0 \\
0   & 1
\end{bmatrix}\bigg)
\end{eqnarray*} 
with $x \in \Q_p$, $2r+m+2=n$, and $s \in \{0,\frac{1}{p}, \ldots, \frac{p-1}{p}\}$. \\
TYPE III):
\begin{eqnarray*}
\bigg(p^r
\begin{bmatrix}
1 & x \\
0 & 1 
\end{bmatrix}
\begin{bmatrix}
p^m & 0 \\
0   & 1 
 \end{bmatrix},
\begin{bmatrix}
p^n & 0 \\
0   & 1 
\end{bmatrix}
\begin{bmatrix} 
0   & -1 \\ 
p^2 & 0
\end{bmatrix}
\begin{bmatrix}
1 & t \\ 
0 & 1 
\end{bmatrix}
\bigg)
\end{eqnarray*} 
with $x \in \Q_p$, $m=n+2-2r$ and $t \in \{0,\frac{1}{p}, \ldots, \frac{p-1}{p}\}$. \\
TYPE IV):
\begin{eqnarray*}
\bigg(p^r
\begin{bmatrix}
1 & x \\
0 & 1 
\end{bmatrix}
\begin{bmatrix}
p^m & 0 \\ 
0   & 1 
\end{bmatrix}
\begin{bmatrix}
0   & -1 \\
p^2 & 0
\end{bmatrix}
\begin{bmatrix}
1 & s \\
0 & 1
\end{bmatrix}
,
\begin{bmatrix}
p^n & 0 \\
0   & 1 
\end{bmatrix}
\begin{bmatrix}
0   & -1 \\
p^2 & 0
\end{bmatrix}
\begin{bmatrix}
1 & t \\
0 & 1 
\end{bmatrix}
\bigg)
\end{eqnarray*} 
with $x \in \Q_p$, $2r+ m=n$, and $s,t \in \{0,\frac{1}{p}, \ldots, \frac{p-1}{p}\}$.\\

Let us see the contribution of each type of $h = (h_1,h_2)$ in (\ref{eqn:W3}) to $W_p(1)$. 
We will write 
\[
\rho(h)((e_1,\a))= \bigg(\begin{bmatrix}
a_1 & b_1\\
c_1 & d_1
\end{bmatrix},
\begin{bmatrix}
a_2 & b_2 \\
c_2 & d_2
\end{bmatrix} \bigg).
\]
TYPE I).
If an element $h = (h_1,h_2)$ contributes to $W_p(1)$, at least, $\rho(h)((e_1,\a)) \in {\rm supp}(\vp_p^{{\rm lev}})$ and $\b(h_1)\b(h_2) \neq 0$.
Therefore, by (\ref{eqn:spcusp2}) and (\ref{eqn:vpK-inv}), we can assume  
\begin{eqnarray*}
\bigg(\begin{bmatrix}
0 &  p^{-1-m-r}\\
0 & 0
\end{bmatrix},
\begin{bmatrix}
p^{-1-m+n-r} & p^{-1-m-r}x \\
0 & p^{-1-r}
\end{bmatrix} \bigg) 
\in 
\begin{bmatrix}
\Z_p & p^{-1}\Z_p \\
p\Z_p & \Z_p
\end{bmatrix} \oplus 
\begin{bmatrix}
p^{-1}\Z_p^{\t} & p^{-1}\Z_p \\
p\Z_p & p^{-1}\Z_p^{\t}
\end{bmatrix} 
\end{eqnarray*}
with $m+2r = n,m \ge 0$.
Observing $a_2,d_2$ (resp. $c_1$), we have $r=0,n=m$ (resp. $m \le 0$). 
Thus
\[
\rho(h)((e_1,\a)) \in {\rm supp}(\vp_p^{{\rm lev}}) \Longleftrightarrow m=n=r= 0, x \in \Z_p.
\]
Therefore, the total contribution of this type in (\ref{eqn:W3}) is 
\begin{eqnarray*}
{\rm vol}(\G_0(p^2)_p \t \G_0(p^2)_p) r_p(1,h)\vp_p(e_{1},\a) W_{1,p}(1)W_{2,p}(1) = {\rm vol}(\G_0(p^2)_p \t \G_0(p^2)_p).
\end{eqnarray*}
TYPE II).
By (\ref{eqn:spcusp2}) and (\ref{eqn:vpK-inv}), we can assume
\begin{eqnarray*}
\bigg(\begin{bmatrix}
0 &  p^{-1-m-r}s\\
0 & -p^{-1-m-r}
\end{bmatrix},
\begin{bmatrix}
p^{-1-m+n-r}s & p^{-3-m-r}(-p^m+p^2sx) \\
p^{-1-m+n-r} & -p^{-1-m-r}x
\end{bmatrix} \bigg) \\
\in 
\begin{bmatrix}
\Z_p & p^{-1}\Z_p \\
p\Z_p & \Z_p
\end{bmatrix} \oplus 
\begin{bmatrix}
p^{-1}\Z_p^{\t} & p^{-1}\Z_p \\
p\Z_p & p^{-1}\Z_p^{\t}
\end{bmatrix}
\end{eqnarray*}
with $2r+m+2=n \ge 0$ and $s \in \{0,\frac{1}{p}, \ldots, \frac{p-1}{p}\}$.
Observing $a_2$, we have $s \neq 0$.
Observing $b_1$ and $a_2$, we have $n \le 0$.
Thus $n=0$.
Observing $c_2$, we have $-1-m-r \ge 1$.
Observing $b_2,d_2$, we conclude that, if $\rho(h)(e_1,\a) \in {\rm supp}(\vp_p^{{\rm lev}})$, then 
\begin{eqnarray}
\rho((\begin{bmatrix}
1 & x \\
0 & 1
\end{bmatrix}h_1,h_2))(e_1,\a) \in {\rm supp}(\vp_p^{{\rm lev}}) \label{eqn:keycanc2}
\end{eqnarray}
for any $x \in p^{-1}\Z_p$.
But, by the property 
\begin{eqnarray}
\b(\begin{bmatrix}
1 & x \\
0 & 1
\end{bmatrix} h_1) = \psi_p(x)\b(h_1), \label{eqn:keycanc}
\end{eqnarray}
the contribution of this type is canceled.\\ 
TYPE III).
By (\ref{eqn:spcusp2}) and (\ref{eqn:vpK-inv}), we can assume
\begin{eqnarray*}
\bigg(\begin{bmatrix}
p^{1-m-r} &  p^{1-m-r}t\\
0 & 0
\end{bmatrix},
\begin{bmatrix}
p^{1-m-r}x & p^{-1-m-r}(-p^n+p^2tx) \\
-p^{1-r} & -p^{1-r}t
\end{bmatrix}
\bigg) \\
\in \Big(
\begin{bmatrix}
\Z_p & p^{-1}\Z_p \\
p\Z_p & \Z_p
\end{bmatrix} \oplus 
\begin{bmatrix}
p^{-1}\Z_p^{\t} & p^{-1}\Z_p \\
p\Z_p & p^{-1}\Z_p^{\t}
\end{bmatrix} \Big)
\end{eqnarray*}
with $m=n+2-2r \ge 0$ and $t \in \{0,\frac{1}{p}, \ldots, \frac{p-1}{p}\}$.
Observing $c_2$, we have $r \le 0$.
Thus, $d_2 = -p^{1-r}t$ cannot belong to $p^{-1}\Z_p^{\t}$.
This type does not contribute.\\
TYPE IV).
By (\ref{eqn:spcusp2}) and (\ref{eqn:vpK-inv}), we can assume
\begin{eqnarray*}
\bigg(\begin{bmatrix}
p^{1-m-r}s &  p^{1-m-r}st\\
-p^{1-m-r} & -p^{1-m-r}t
\end{bmatrix},
\begin{bmatrix}
p^{-1-m-r}(-p^m+p^2sx) & -p^{-1-m-r}(p^ns+p^mt-p^2stx) \\
-p^{1-m-r}x & -p^{-1-m-r}(p^n-p^2tx)
\end{bmatrix} \bigg)\\
 \in \Big(
\begin{bmatrix}
\Z_p & p^{-1}\Z_p \\
p\Z_p & \Z_p
\end{bmatrix} \oplus 
\begin{bmatrix}
p^{-1}\Z_p^{\t} & p^{-1}\Z_p \\
p\Z_p & p^{-1}\Z_p^{\t}
\end{bmatrix} \Big).
\end{eqnarray*}
Here $2r+ m=n$ and $s,t \in \{0,\frac{1}{p}, \ldots, \frac{p-1}{p}\}$.
Observing $c_1$, we have $-m-r \ge 0$.
If $-m-r > 0$, then the contribution is canceled by (\ref{eqn:keycanc2}) and (\ref{eqn:keycanc}).
Hence, we can assume $m+r = 0,n = r$.
Then, since $c_2 = -px \in p\Z_p$, we have $x \in \Z_p$.
Since $d_2 = -p^{-1}(p^n-p^2tx) \in p^{-1}\Z_p$, we have
\begin{eqnarray*}
p^{n-1} \in ptx + p^{-1}\Z_p = p^{-1}\Z_p.
\end{eqnarray*}
Thus $n \ge 0$.
Since $a_2= p^{-1}(-p^m+p^2sx) \in p^{-1}\Z_p$, we have
\begin{eqnarray*}
p^{m-1} \in psx + p^{-1}\Z_p = p^{-1}\Z_p.
\end{eqnarray*}
Thus $m \ge 0$, and $n =r = -m \le 0$.
Hence
\[
m=n=r=0.
\]
Under this condition, observing $b_2 \in p^{-1}\Z_p$, we conclude that $s+t \in \Z$. 
Thus the contribution is calculated as
\begin{eqnarray}
c \sum_{y=0}^{p-1}\b(\begin{bmatrix}
0 & -1 \\
p^2 &0 
\end{bmatrix} \begin{bmatrix}
1 & \frac{y}{p}  \\
0 & 1
\end{bmatrix})
\b(\begin{bmatrix}
0 & -1 \\
p^2 &0 
\end{bmatrix}
\begin{bmatrix}
1 & -\frac{y}{p}  \\
0 & 1
\end{bmatrix}) \label{eqn:cont4type}
\end{eqnarray}
with $c= {\rm vol}(\G_0(p^2)_p \t \G_0(p^2)_p)$.
Since $\ep(\frac{1}{2},\pi_{1,p}) = 1$, the eigenvalue of $\b$ for the Atkin-Lehner operetor is $1$ and 
\begin{eqnarray*}
\b(\begin{bmatrix}
0 & -1 \\
p^2 &0 
\end{bmatrix}
\begin{bmatrix}
1 & \frac{y}{p} \\
0 & 1 
\end{bmatrix})
=
\b(\begin{bmatrix}
1 &0  \\
py & 1 
\end{bmatrix}
\begin{bmatrix}
0 & -1 \\
p^2 &0 
\end{bmatrix}
) =
\b(\begin{bmatrix}
1 & 0 \\
py & 1 
\end{bmatrix}
).
\end{eqnarray*}
Let $\W(\pi_{1,p},\psi_p)$ be the Whittaker model of $\pi_{1,p}$ with respect to $\psi_p$.
We define a mapping
\begin{eqnarray*}
C: \W(\pi_{1,p},\psi_p) \ni w(g) \longrightarrow w(\begin{bmatrix}
-1 & 0 \\
0 & 1
\end{bmatrix}
g) = w(\begin{bmatrix}
-1 &0  \\
0 & 1
\end{bmatrix}
g\begin{bmatrix}
-1 &  0\\
 0& 1
\end{bmatrix}
) \in \W(\pi_{1,p},\ol{\psi}_p).
\end{eqnarray*}
By the local newform theory for GL(2), the dimension of the subspace of right $\G_0(p^2)_p$-invariant vectors in $\W(\pi_{1,p},\ol{\psi}_p)$ is one.
Hence, 
\[
C(\b) = \ol{\b}.
\]
Now, (\ref{eqn:cont4type}) is calculated as 
\begin{eqnarray*}
& & c \sum_{y=0}^{p-1}\b(\begin{bmatrix}
1 &0   \\
py & 1
\end{bmatrix}
)
\b(\begin{bmatrix}
1 & 0  \\
-py & 1
\end{bmatrix}
) =
c \sum_{y=0}^{p-1}\b(\begin{bmatrix}
1 &  0 \\
py & 1
\end{bmatrix}
)
\b(\begin{bmatrix}
-1 &  0\\
0 & 1
\end{bmatrix}
\begin{bmatrix}
1 &0   \\
py & 1
\end{bmatrix}
\begin{bmatrix}
-1 &0  \\
 0& 1
\end{bmatrix}
) \\
&=& c \sum_{y=0}^{p-1}\b(\begin{bmatrix}
1 &0   \\
py & 1
\end{bmatrix}
)
C(\b)(\begin{bmatrix}
1 & 0  \\
py & 1
\end{bmatrix}
) = 
c \sum_{y=0}^{p-1}\b(\begin{bmatrix}
1 &  0 \\
py & 1
\end{bmatrix}
)
\ol{\b}(\begin{bmatrix}
1 & 0  \\
py & 1
\end{bmatrix}
) > 0.
\end{eqnarray*}
Hence the total contribution of this type is some positive number.
Combining the contribution of type I), we conclude 
\[
W_{p}(1) \neq 0.
\]
For each $i \in \{0,1,2,3,4\}$ let 
\begin{eqnarray*}
\vp_{p}^{{\rm lev},\chi^i}(x_1,x_2)= \chi^{-i}(\det(p^2x_2)){\rm Ch} \Big(
\begin{bmatrix}
\Z_p & p^{-1}\Z_p \\
p\Z_p & \Z_p
\end{bmatrix}
 \oplus 
\begin{bmatrix}
p^{-1}\Z_p^{\t} & p^{-1}\Z_p \\
\Z_p & p^{-1}\Z_p^{\t}
\end{bmatrix}\Big).
\end{eqnarray*}
Similar to (\ref{eqn:vpK-inv}), $\vp_{p}^{{\rm lev},\chi^i}$ is right  $\K(p)_p^{\rm lev} \t ((\G_0(p^2)_p \t \G_0(p^2)_p) \cap H^1(\Q_p))$-invariant.
For the pair of $(\chi^i\pi_1, \chi^i\pi_2)$ with $1 \le i \le 4$, the proof of the nonvanishing of the local Whittaker function at $p$ is similar to above.
Thus, 
\begin{theorem}
Let $\pi_1 = \pi(\nu), \pi_2 = \pi(\nu^3)$ be the unitary irreducible cuspidal  automorphic representations of $\GL_2(\A)$ associated to the gr\"o{\ss}encharacter $\nu =\frac{\mu}{|\mu|}$. 
Then, each of five globally generic endoscopic lift $\chi^i\Th(\pi_1 \bt \pi_2) = \Th(\chi^i\pi_1 \bt \chi^i\pi_2)$ for $0 \le i \le 4$ has a right $\K(p)_{\A}^{\rm lev}$-invariant automorphic form.
\end{theorem}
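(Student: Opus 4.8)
The plan is to produce, for each $i\in\{0,1,2,3,4\}$, an explicit Whittaker function of the theta lift $\Th(\chi^i\pi_1\bt\chi^i\pi_2)$ that is nonzero and right $\K(p)^{\rm lev}_{\A}$-invariant. Since the Whittaker function of $\th(\vp,f_1\bt f_2)$ factors as an Euler product $\prod_v W_v$ with $W_v$ as in (\ref{eqn:W3}), if every $W_v(1)\neq0$ then $W(1)\neq0$ and hence $\th(\vp,f_1\bt f_2)$ is itself a nonzero automorphic form; so it is enough, after fixing $\psi$ and the factorization $\Th(\pi_1\bt\pi_2)\simeq\ot_v\th(\pi_{1,v}\bt\pi_{2,v})$, to choose local data $(\vp_v,W_{1,v},W_{2,v})$ at every place so that $W_p$ is right $\K(p)^{\rm lev}_p$-invariant, $W_v$ is right $\GSp_2(\Z_v)$-invariant for finite $v\neq p$, and $W_v(1)\neq0$ everywhere. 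The resulting form then lies in the globally generic lift recalled above. I would carry this out first for $i=0$.

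The archimedean and the other nonarchimedean places are harmless: at $\i$ one picks an explicit Schwartz--Bruhat function $\vp_\i$ with $W_\i(1)\neq0$, and at $v\neq p,\i$ one takes $\vp_v={\rm Ch}({\rm M}_2(\Z_v)^2)$ together with spherical vectors $W_{i,v}$, which makes $W_v$ right $\GSp_2(\Z_v)$-invariant with $W_v(1)\neq0$ since $\pi_{1,v},\pi_{2,v}$ are unramified. Everything is concentrated at $p$. There I would take $W_{1,p}=W_{2,p}=\b$, the common new vector of $\pi_{1,p}\simeq\pi_{2,p}$ (recall $\pi_2=\pi(\nu^3)$ was chosen via Lemma \ref{lem:supB} precisely so that $\pi_{2,p}\simeq\pi_{1,p}$), normalized by $\b(1)=1$ and satisfying (\ref{eqn:spcusp2}) by supercuspidality, and $\vp_p=\vp_p^{\rm lev}$. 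The first step is the Weil-representation equivariance (\ref{eqn:vpK-inv}), which follows from the explicit action formulas and makes $W_p$ automatically right $\K(p)^{\rm lev}_p$-invariant. The real task is to prove $W_p(1)\neq0$: I would unfold (\ref{eqn:W3}) over a complete set of representatives of $Z_{(e_1,\a)}(\Q_p)\bs H^1(\Q_p)/(\G_0(p^2)_p\t\G_0(p^2)_p)$ and, using the Cartan and Bruhat decompositions of $\GL_2(\Q_p)$ modulo $\G_0(p^2)_p$, organize them into the four families TYPE I)--IV) above.

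For each family one computes $\rho(h)((e_1,\a))$ explicitly and imposes simultaneously that it lie in ${\rm supp}(\vp_p^{\rm lev})$ and that $\b(h_1)\b(h_2)\neq0$, the latter being (\ref{eqn:spcusp2}). TYPE III) is killed purely by the support condition. For TYPE II) and the ``deep'' part of TYPE IV) one observes that the support condition is insensitive to left translation of $h_1$ by $\begin{bmatrix}1&x\\0&1\end{bmatrix}$ with $x\in p^{-1}\Z_p$ (this is (\ref{eqn:keycanc2})), so summing the relation (\ref{eqn:keycanc}) over those $x$ produces $\sum_x\psi_p(x)=0$ and these contributions cancel. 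TYPE I) survives only at $m=n=r=0$ and $x\in\Z_p$, contributing the positive number ${\rm vol}(\G_0(p^2)_p\t\G_0(p^2)_p)$. The delicate point is the surviving part of TYPE IV), forced to $m=n=r=0$: its contribution is the sum (\ref{eqn:cont4type}), and to show it is nonzero I would use the known value $\ep(\tfrac{1}{2},\pi_{1,p})=1$, which forces the Atkin--Lehner eigenvalue of $\b$ to be $+1$, in order to rewrite $\b$ on the Atkin--Lehner translates as values on the lower unipotents $\begin{bmatrix}1&0\\py&1\end{bmatrix}$; then the one-dimensionality of the space of right $\G_0(p^2)_p$-invariant vectors in $\W(\pi_{1,p},\ol\psi_p)$ identifies the reflection operator $C$ with complex conjugation, $C(\b)=\ol\b$, so (\ref{eqn:cont4type}) becomes a positive multiple of $\sum_{y=0}^{p-1}\b(u_y)\overline{\b(u_y)}$ with $u_y=\begin{bmatrix}1&0\\py&1\end{bmatrix}$ --- a sum of squared absolute values, hence strictly positive. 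Together with the positive TYPE I) term this gives $W_p(1)\neq0$, and therefore the automorphic form $F_0\in\Th(\pi_1\bt\pi_2)$ is nonzero and right $\K(p)^{\rm lev}_{\A}$-invariant.

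Finally, for $1\le i\le4$ I would replace $\vp_p^{\rm lev}$ by the twisted Schwartz function $\vp_p^{{\rm lev},\chi^i}$, verify the analogue of (\ref{eqn:vpK-inv}) for right $\K(p)^{\rm lev}_p\t((\G_0(p^2)_p\t\G_0(p^2)_p)\cap H^1(\Q_p))$-invariance, and rerun the TYPE I)--IV) analysis; the only new feature of the integrand is the locally constant character factor $\chi^{-i}(\det(p^2x_2))$, which disturbs neither the support computations nor the cancellations, so one again lands on a strictly positive sum of $|\b|^2$-type terms. This produces $F_i\in\Th(\chi^i\pi_1\bt\chi^i\pi_2)=\chi^i\Th(\pi_1\bt\pi_2)$ with a nonzero right $\K(p)^{\rm lev}_{\A}$-invariant vector, as asserted. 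I expect the main obstacle to be precisely this local analysis at $p$: arranging the double-coset bookkeeping so that TYPE II), III) and the deep part of TYPE IV) cancel, and then extracting the \emph{strict} positivity of the surviving TYPE IV) contribution from the Atkin--Lehner sign together with the identification $C(\b)=\ol\b$.
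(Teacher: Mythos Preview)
Your proposal is correct and follows the paper's approach essentially verbatim: the same choice of local data $(\vp_v,W_{1,v},W_{2,v})$, the same TYPE I)--IV) double-coset decomposition at $p$, the same cancellation mechanism via (\ref{eqn:keycanc2})--(\ref{eqn:keycanc}) for TYPE II) and the deep part of TYPE IV), and the same Atkin--Lehner/$C(\b)=\ol\b$ argument to extract strict positivity of the surviving TYPE IV) term. The treatment of the twists $i\ge1$ via $\vp_p^{{\rm lev},\chi^i}$ also matches the paper, which likewise only asserts that the computation is ``similar to above''; note that since $\det(\rho(h)\a)=\det(\a)$ for $h\in H^1(\Q_p)$, the character factor $\chi^{-i}(\det(p^2x_2))$ is indeed constant along the integral and does not disturb positivity.
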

\begin{remark}
For a locally generic admissible irreducible representation $\tau$ of $\GSp_2(\Q_v)$, Novodvorsky $\cite{Nov}$ defined a $L$-function of $\tau$.
It holds 
\[
L(s,\chi_v^i \Th(\pi_1 \bt \pi_2)_v) = L(s,\mu \cdot \chi^i \circ N_{K/\Q})_vL(s,\mu^3\cdot 
\chi^i \circ N_{K/\Q})_v.
\]
\end{remark}
\begin{remark}\label{rem:arcsch}
We give a Schwartz-Bruhat function $\vp_{\i} \in \Sc({\rm M}_2(\R)^2)$ for the $\th$-lift of $(\pi_1,\pi_2)$ as follows. 
Set 
\begin{eqnarray*}
P_+(x)= {\rm Tr}(x\begin{bmatrix}
-\sqrt{-1} & -1\\
-1 & \sqrt{-1} 
\end{bmatrix}), \ 
P_-(x)={\rm Tr}(x \begin{bmatrix}
\sqrt{-1} & 1\\
-1 & \sqrt{-1} 
\end{bmatrix})
\end{eqnarray*}
so that $P_{\pm}(\rho(u_{t_1},u_{t_2})x) = e^{-\sqrt{-1} (t_2 \pm t_1)}P_{\pm}(x)$ where $u_{t_i}=\begin{bmatrix}
\cos t_i & \sin t_i \\
-\sin t_i & \cos t_i  
\end{bmatrix} \in {\rm SO}_2(\R)$ for $i=1,2$.
Let $s_1,s_2$ be indeterminants.
Define $\vp_{\i_j} \in \Sc({\rm M}_2(\R)^2) \ot \C[s_1,s_2]$ by
\begin{eqnarray*}
\vp_{\i}(x_1,x_2) = \exp(-\pi \big(\sum_{i=1}^2 a_{i}^2+b_{i}^2+c_{i}^2+d_{i}^2\big))P_+(s_1x_1 + s_2x_2 )^{3} P_-(s_2x_1 - s_1x_2 )
\end{eqnarray*} 
where we write $x_i = \begin{bmatrix}
a_i & b_i \\
c_i & d_i
\end{bmatrix}$.
\end{remark}
\begin{remark}
The local $\th$-lift $\chi^i\th(\pi_{1,p} \bt \pi_{2,p})$ is the irreducible constituent denoted by $\tau(S,\chi^i\pi(\mu)_{p})$ of $1 \rtimes \chi^i\pi_{1,p}$ in $\cite{Ro-Sch}$. 
In particular, the central character $\th(\pi_{1,p} \bt \pi_{2,p})$ is trivial.
According to Roberts, Schmidt $\cite{Ro-Sch}$, $\tau(S,\pi(\mu)_{p})$ has a right $\K(p^4)_p$-invariant Whittaker function, which is the newform.
It is really realized by the $\th$-lift as before with using $\b \in \W(\pi_{1,p},\psi)$ and the  following Schwartz-Bruhat function at $p$: 
\begin{eqnarray*}
\vp_{p}^{{\rm para}}(x_1,x_2) = {\rm Ch} \Big(
\begin{bmatrix}
p\Z_p & p^{-1}\Z_p \\
p^3\Z_p & p\Z_p
\end{bmatrix}
 \oplus p^{-1}{\rm M}_2(\Z_p) \Big).
\end{eqnarray*}
\end{remark}

\section{A comparison of $X$ and $\mathcal{A}^{{\rm lev}}_{1,11}$}
In this section, we  shall discuss the relation of differential forms on $X$ and $\mathcal{A}^{{\rm lev}}_{1,11}$ 
and of $L$-function of these varieties. 

Recall the notations of Section 1.  
Let $\Gamma':=\Gamma(11)$ be the congruence subgroup of level $11$ in 
$\Sp_2(\Z)$ which 
is normal in $g K(11)^{{\rm lev}}g^{-1}$. We denote by $G$ the quotient 
group $gK(11)^{{\rm lev}}g^{-1}/\Gamma'$. Since $G$ is finite, 
the restriction map induces an isomorphism 
of group cohomologies:   
$H^3(\K(11)^{{\rm lev}},\C)\simeq H^3(\Gamma',\C)^G$. 
Since $\K(11)^{{\rm lev}}\backslash\mathbb{H}_2$ (resp. $\Gamma'\backslash\mathbb{H}_2$) 
is an Eilenberg-MacLane space of $\K(11)^{{\rm lev}}$ (resp. $\Gamma'$), 
we have  $H^3(\K(11)^{{\rm lev}},\C)=H^3(\mathcal{A}^{{\rm lev}}_{1,11},\C)$ 
(resp. $H^3(\Gamma',\C)=H^3(S_{\Gamma'},\C)$,\ $S_{\Gamma'}:=\Gamma'\backslash\mathbb{H}_2$) even if 
$\K(11)^{{\rm lev}}$ has torsion elements because we are considering the 
complex  coefficient. 
We note that $S_{\Gamma'}$ is a quasi-projective smooth variety since $\Gamma'$ is torsion free. 
Let $\widetilde{S_{\Gamma'}}$ be a toroidal compactification of $S_{\Gamma'}$ and let 
$j:S_{\Gamma'}\hookrightarrow 
\widetilde{S_{\Gamma'}}$ be the natural inclusion. 
We consider the parabolic cohomology $H^3_!(S_{\Gamma'},\C):={\rm Im}(
H^3(\widetilde{S_{\Gamma'}},\C)\stackrel{j^\ast}{\longrightarrow}H^3(S_{\Gamma'},\C))$. 
Then by observation in Section 7 in \cite{o&s}, we have 
$H^3_{{\rm cusp}}(S_{\Gamma'},\C)=H^3_!(S_{\Gamma'},\C)$. 
Here the cuspidal part $H^3_{{\rm cusp}}(S_{\Gamma'},\C)$ 
(resp. $H^3_{{\rm cusp}}(\mathcal{A}^{{\rm lev}}_{1,11},\C)$) of 
$H^3(S_{\Gamma'},\C)$ (resp. 
$H^3(\mathcal{A}^{{\rm lev}}_{1,11},\C))$ is given in terms of the $(\mathfrak{g},K)$-cohomology 
(cf. Section 2 in \cite{o&s}). That is a complement of the Eisenstein part in 
$H^3(S_{\Gamma'},\C)$ (resp. 
$H^3(\mathcal{A}^{{\rm lev}}_{1,11},\C)$). 
Combining these, we have
$$H^3_{{\rm cusp}}(\mathcal{A}^{{\rm lev}}_{1,11},\C)=H^3_!(S_{\Gamma'},\C)^G
$$ 
and 
$$H^3_{{\rm cusp}}(\mathcal{A}^{{\rm lev}}_{1,11},\C)=
m(\omega_2,K(11)^{{\rm lev}})H^{2,1}(\mathfrak{g},K;H_2)
\oplus m(\omega_3,K(11)^{{\rm lev}})H^{1,2}(\mathfrak{g},K;H_3)$$
by the decomposition (2) at p.505 of \cite{o&s} (see loc.cit. 
for the notation appears here). 
It is easy to see that ${\rm Gr}^W_3 H^3(\mathcal{A}^{{\rm lev}}_{1,11},\C)$ 
contains $H^3_!(S_{\Gamma'},\C)^G=
H^3_{{\rm cusp}}(\mathcal{A}^{{\rm lev}}_{1,11},\C)$.
We hope that the equality  
${\rm Gr}^W_3 H^3(\mathcal{A}^{{\rm lev}}_{1,11},\C)=
H^3_{{\rm cusp}}(\mathcal{A}^{{\rm lev}}_{1,11},\C)$, 
but we do not know if it holds.

We now give a proof of Theorem 1.3.

\begin{proof} The assertion (i) directly follows from the results in Section 2 and Section 3. 
We give a proof of (ii). Hereafter $H^\ast$ means \'etale cohomology and 
we use freely the facts of \'etale cohomology (we refer \cite{milne} for this). 
Since $X$ and $\mathcal{A}^{{\rm lev}}_{1,11}$ is birational to each other, 
we have a common non-empty open subvariety $U$ defined over $\Q$. 
Now we  have exact sequences of compact support \'etale cohomology:
$$\cdots\lra H^3_c(U_{\bQ},\Q_\ell)\lra H^3_c(X_{\Q},\Q_\ell)=H^3(X_{\bQ},\Q_\ell)\lra 
H^3_c((X\setminus U)^{{\rm red}}_{\bQ},\Q_\ell)\lra\cdots$$
$$\cdots\lra H^3_c(U_{\bQ},\Q_\ell)\lra H^3_c({\mathcal{A}^{{\rm lev}}_{1,11}}_{\bQ},\Q_\ell)\lra 
H^3_c((\mathcal{A}^{{\rm lev}}_{1,11}\setminus U)^{{\rm red}}_{\bQ},\Q_\ell)\lra\cdots.$$
Here $(X\setminus U)^{{\rm red}}$ is the closed subscheme $X\setminus U$ with 
the reduced scheme structure (it is same for $(\mathcal{A}^{{\rm lev}}_{1,11}\setminus U)^{{\rm red}}$). 
The difference between $H^3(X_{\bQ},\Q_\ell)$ and $H^3_c({\mathcal{A}^{{\rm lev}}_{1,11}}_{\bQ},\Q_\ell)$ 
are described in terms of $H^3_c((X\setminus U)^{{\rm red}}_{\bQ},\Q_\ell)$ and 
$H^3_c((\mathcal{A}^{{\rm lev}}_{1,11}\setminus U)^{{\rm red}}_{\bQ},\Q_\ell)$. If the closed subschemes
$X\setminus U$ and $\mathcal{A}^{{\rm lev}}_{1,11}\setminus U$ 
containes a scheme $Z$ of dimension less than or equal to one, then the cohomology $H^3_c(Z_{\bQ},\Q_\ell)$ vanishes. So 
we may assume that  these subschemes are unions of surfaces (hence are of dimension 2).  
By Poincare duality, we have $H^3_c((\mathcal{A}^{{\rm lev}}_{1,11} \setminus U)^{{\rm red}}_{\bQ},\Q_\ell)
\simeq H^1((\mathcal{A}^{{\rm lev}}_{1,11} \setminus U)^{{\rm red}}_{\bQ},\Q_\ell)(-1)$. 
Therefore, 
for a sufficiently large $p\not=\ell$, any eigenvalue of
the Frobenius element ${\rm Frob}_p$ acting on $H^3_c((\mathcal{A}^{{\rm lev}}_{1,11}\setminus U)^{{\rm red}}_{\bQ},\Q_\ell)$ 
is of form $p \alpha$ where $\alpha\in \overline{\Z}$ is some Weil number.  
Since $X$ is smooth cubic threefold, the same thing occurs for $H^3(X_{\bQ},\Q_\ell)$ 
as in Proposition 2.5. From this, any eigenvalue of the action of ${\rm Frob}_p$ on 
$H^3_!({\cA^{{\rm lev}}_{1,11}}_{\bQ},\Q_\ell)$ is a multiple of $p$. 
This claims us that $L(s,g)$ does not occur in 
$L(s,H^3_!({\cA^{{\rm lev}}_{1,11}}_{\bQ},\Q_\ell))$.
\end{proof}

\section{Some Remarks.}
Keep the notations in Section 1 which is 
used to state Conjecture \ref{conj:hodge1type}. 
Let $V_\G:={\rm Gr}^W_wH^3_{{\rm betti}}({S_\Gamma}_\C,\mathcal{L}_{a,b})$.
Assume 
\begin{eqnarray}
h^{a+b+3,0}(V_{\G})= h^{0,a+b+3}(V_{\G}) = 0, \ \ h^{a+2,b+1}(V_{\G}) = h^{b+1,a+2}(V_{\G}) \neq 0. \label{eqn:hodgeacc}
\end{eqnarray}
Our moduli space $\cA^{{\rm lev}}_{1,11}$ is an example of 
such a variety for $a= b= 0$. 

Let $\Pi$ be an irreducible cuspidal automorphic representation 
of $\GSp_2(\A)$ which arises from a 
non-holomorphic differential form in $H^{a+2,b+1}(V_{\G})$. 
By the argument before Conjecture \ref{conj:hodge1type}, we guess 
that $\Pi$ should be a weak endoscopic lift associated to a pair 
$(\pi_1,\pi_2)$ so that  
$\pi_{1,\i}|_{\SL_2}$ (resp. $\pi_{2,\i}|_{\SL_2}$) is a discrete series representation of lowest weight $a-b+2$ (resp. $a+b+4$).
We will consider when $V_{\G}$ tends to have the Hodge type (\ref{eqn:hodgeacc}).
If $\Th(\pi_1^{\B} \bt \pi_2^{\B})$ for a quaternion algebra $\B$ contributes to $H^{a+2,b+1}(V_\G)$, then $\Th(\pi_1^{\B} \bt \pi_2^{\B})$ has a right $\G(\A)$-invariant vector and $\B_{\i}$ is split.
On the other hand, if $\B_{\i}$ is not split (i.e., $\B$ is a definite quaternion algebra), then $\Th(\pi_1^{\B} \bt \pi_2^{\B})$ is the so-called Yoshida lift and holomorphic.
In that case, by the Hasse principle, the definite quaternion algebra $\B$ is ramified at some nonarchimedean place $v$.
Here, we should remark that there is no Yoshida lift associated to $(\pi_1,\pi_2)$, if $\pi_1^{\B}$ and $\pi_2^{\B}$ do not exist simultaneously for a common $\B$ (i.e., one of $\pi_{1,v},\pi_{2,v}$ is a principal series representation for every nonarchimedean place $v$).

We say $\G$ is $inadmissible$, if the Weil representation ${r'}^2_v$ of $\Sp_2(\Q_v) \t \O(\D_v)$ for some nonarchimedean place $v$ does not have a right $\G_v$-invariant vector, where $\D_v$ is the unique division quaternion algebra $\D_v$ over $\Q_v$.
We see below that 
$\K(N)$ and $\K(N)^{{\rm lev}}$ are inadmissible for any positive integer $N$. 
For a place $v$, let ${r'}^1_{v}$ (resp. 
${r'}^2_v$) be the Weil representation with respect to some nontrivial additive character $\psi_v$ of ${\rm SL}_2(\Q_{v}) \t {\rm O}(\D_v)$ (resp. $\Sp_2(\Q_{v}) \t {\rm O}(\D_v)$) on $\Sc(\D_v)$ (resp. $\Sc(\D^2_v)$).
It is easy to see that ${r'}^1_v|_{\SL(2)}$ does not have a nontrivial right ${\rm SL}_2(\Z_{v})$-invariant vector for a nonarchimedean place $v$. 
Consider the embedding from ${\rm SL}_2(\Z_{v})$ into $\K(N)_v^{\rm lev}$ via 
\begin{eqnarray}
{\rm SL}_2 \ni \begin{bmatrix}
a & b\\
c & d
\end{bmatrix}
\longmapsto 
\begin{bmatrix}
a & 0 & b & 0 \\
0 & 1 & 0 & 0 \\
c & 0 & d &  0\\
0 & 0 & 0 & 1
\end{bmatrix} \in \Sp_2. \label{eqn:embslsp}
\end{eqnarray}
If ${r'}^2_v|_{\Sp_2}$ has a right 
$\K(N)_v^{\rm lev}$-invariant (or $\K(N)_v^{\rm lev}$-invariant) vector, 
then  ${r'}^1_v|_{\SL_2}$ has a nontrivial right ${\rm SL}_2(\Z_{v})$-invariant vector which gives a contradiction. 
Therefore ${r'}^2_v|_{\Sp_2}$ does not have a right 
$\K(N)_v^{\rm lev}$-invariant (or $\K(N)_v^{\rm lev}$-invariant) vector. 
Therefore we can conclude that there are no contributions of weak 
endoscopic lifts to 
$H^{a+b+3,0}(V_{\G})$ for any inadmissible $\G$, since there is no holomorphic weak endoscopic lift associated to $(\pi_1,\pi_2)$ which has a right $\G(\A)$-invariant vector. Furthermore 
in some case (cf. $\G=K(11)^{{\rm lev}}$ and $a=b=0$), $V_\G$
tends to have the Hodge type (\ref{eqn:hodgeacc}). 

In \cite{o&y}, we gave a conjecture for holomorphic parts of Siegel threefolds.
It can be generalized as follows. This is also along 
the vein of Arthur's conjecture (\cite{arthur},\cite{tilouine}).
\begin{conjecture}
Let $\G \subset \GSp_2(\Q)$ be an arithmetic subgroup.
Suppose $V_\G={\rm Gr}^W_wH^3_{{\rm betti}}({S_{\G}}_\C,\mathcal{L}_{a,b})$ 
has Hodge numbers:
\begin{eqnarray}
h^{a+b+3,0}(V_\G) = h^{0,a+b+3}(V_\G) \neq 0, \ \ h^{a+2,b+1}(V_\G) = h^{b+1,a+2}(V_\G) = 0 \label{eqn:hodgeacc2}
\end{eqnarray}
with $a \ge b \ge 0$.
Suppose $\Pi$ associated to a component of $H^{a+b+3,0}(V_\G)$ and $\Pi$ has multiplicity one (it is so when $h^{a+b+3,0} = 1$).
Then, $\Pi$ is a holomorphic Saito-Kurokawa representation.
\end{conjecture}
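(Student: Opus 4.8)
The plan is a case analysis on $\Pi$ via Weissauer's classification, using the Hodge-number hypothesis \eqref{eqn:hodgeacc2} to eliminate every possibility except the desired one. First, since $\Pi$ is attached to a nonzero class in $H^{a+b+3,0}(V_\G)$, Theorem 2.1 of \cite{Onote} shows that $\Pi_{\i}|_{\Sp_2(\R)}$ is the holomorphic discrete series with Blattner parameter $(a+b+3,a+b+3)$; in particular $\Pi_{\i}$ is an honest (tempered) discrete series representation. By Theorem III and Proposition 1.5 of \cite{W}, $\Pi$ is then either of general type (neither CAP nor a weak endoscopic lift), a weak endoscopic lift of a pair $(\pi_1,\pi_2)$ of cuspidal automorphic representations of $\GL_2(\A)$, or a CAP representation. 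Throughout we use that a cuspidal representation with a right $\G_\A$-fixed vector and a (holomorphic or generic) discrete series archimedean component contributes a nonzero class to the interior cohomology $H^3_{!}(S_\G,\mathcal{L}_{a,b})\subseteq V_\G$, lying in $H^{a+b+3,0}(V_\G)\oplus H^{0,a+b+3}(V_\G)$ when that component is holomorphic and in $H^{a+2,b+1}(V_\G)\oplus H^{b+1,a+2}(V_\G)$ when it is the generic discrete series; this dichotomy, together with the vanishing $h^{a+2,b+1}(V_\G)=0$, is the engine of the argument.

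Suppose $\Pi$ is of general type. Then, exactly as recalled before Conjecture \ref{conj:hodge1type}, Proposition 1.5 of \cite{W} gives $c_\G^{H}(L_\Pi)=c_\G^{W}(L_\Pi)$, and since our $\Pi$ is holomorphic and contributes to $H^{a+b+3,0}(V_\G)$ we have $c_\G^{H}(L_\Pi)\neq 0$, hence $c_\G^{W}(L_\Pi)\neq 0$; a generic member of $L_\Pi$ with a $\G_\A$-fixed vector then contributes a nonzero class to $H^{a+2,b+1}(V_\G)\oplus H^{b+1,a+2}(V_\G)$, contradicting \eqref{eqn:hodgeacc2}. Hence $\Pi$ is CAP or a weak endoscopic lift. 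Next suppose $\Pi$ is CAP. If $\Pi$ is CAP with respect to the Klingen or Borel parabolic, then by \cite{So} it is a $\th$-lift from $\GO(L_\A)$ for a quadratic field $L$, and such a lift is non-tempered at the archimedean place, in particular not a discrete series representation, contradicting the above; so $\Pi$ is CAP with respect to the Siegel parabolic, i.e.\ a Saito--Kurokawa representation. By Section 4 of \cite{Sc} a non-holomorphic Saito--Kurokawa representation has non-tempered archimedean component, which again contradicts the above. Therefore, if $\Pi$ is CAP it is a holomorphic Saito--Kurokawa representation, and it only remains to rule out the weak endoscopic case.

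So suppose $\Pi$ is a weak endoscopic lift of $(\pi_1,\pi_2)$. By \cite{Ro}, $\Pi=\Th(\pi_1^\B\bt\pi_2^\B)$ for a quaternion algebra $\B$ over $\Q$; since the $\th$-lift is generic when $\B_{\i}$ is split and holomorphic when $\B_{\i}$ is ramified, $\Pi_{\i}$ being a holomorphic discrete series forces $\B$ to be definite, so $\Pi$ is a Yoshida lift and $\B$ ramifies at an even number, $\ge 2$, of finite places. The global $L$-packet of $\Pi$ also contains the generic member $\Pi^{\circ}:=\Th(\pi_1\bt\pi_2)$ associated with the split algebra ${\rm M}_2(\Q)$, and $\Pi$ and $\Pi^{\circ}$ have the same local components at every place where $\B$ splits. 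The crux is to transfer the existence of a right $\G_\A$-fixed vector from $\Pi$ to $\Pi^{\circ}$: at each finite place $v_0$ where $\B_{v_0}=\D_{v_0}$ is the division algebra, one must show that a $\G_{v_0}$-fixed vector in the non-generic local member $\th(\pi_{1,v_0}^{\D_{v_0}}\bt\pi_{2,v_0}^{\D_{v_0}})$ produces one in the generic local member $\th(\pi_{1,v_0}\bt\pi_{2,v_0})$. This is the companion, on the generic side, of the (in)admissibility analysis given above: if $\G$ were inadmissible, that analysis already shows no Yoshida lift contributes to $H^{a+b+3,0}(V_\G)$, contradicting $h^{a+b+3,0}(V_\G)\neq 0$; so $\G$ is admissible at $v_0$, meaning ${r'}^2_{v_0}$ of $\Sp_2(\Q_{v_0})\t\O(\D_{v_0})$ on $\Sc(\D_{v_0}^2)$ carries a $\G_{v_0}$-fixed vector, and by comparing this with the split Weil representation on $\Sc({\rm M}_2(\Q_{v_0})^2)$ — using the explicit realization of the local $\th$-lifts as constituents of (twists of) $1\rtimes\pi_{1,v_0}$ in the style of \cite{Ro-Sch} — one concludes that $\Pi^{\circ}_{v_0}$ also carries a $\G_{v_0}$-fixed vector. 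Granting this, $\Pi^{\circ}$ has a right $\G_\A$-fixed vector and, being generic with generic discrete series at infinity, contributes a nonzero class to $H^{a+2,b+1}(V_\G)\oplus H^{b+1,a+2}(V_\G)$, contradicting \eqref{eqn:hodgeacc2}. Hence $\Pi$ is not a weak endoscopic lift, and with the previous paragraph we conclude that $\Pi$ is a holomorphic Saito--Kurokawa representation. The multiplicity-one hypothesis is used to speak unambiguously of the representation $\Pi$ attached to a component of $H^{a+b+3,0}(V_\G)$ and to match automorphic multiplicities with cohomological ones.

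The main obstacle is the local comparison at the ramified places $v_0$ in the weak endoscopic case: passing from a $\G_{v_0}$-fixed vector in the quaternionic ``small'' member to one in the ${\rm M}_2$ ``large'' member of the local $L$-packet of $\GSp_2(\Q_{v_0})$ is a genuine statement in the local theta correspondence, not a formal consequence of admissibility, and carrying it out for an arbitrary open compact $\G_{v_0}$ is the delicate point. A secondary point requiring care is the purity input invoked repeatedly — that a level-fixed cuspidal representation with generic discrete series at infinity really contributes to ${\rm Gr}^W_3 H^3$ rather than to a lower weight-graded piece — which should follow from the known results on the weights of cuspidal cohomology of Siegel threefolds but must be cited precisely.
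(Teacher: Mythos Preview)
The statement is a \emph{conjecture}, and the paper does not claim to prove it in general: immediately after stating it, the paper proves it only under the additional hypothesis that $\G$ is \emph{inadmissible}. Your proposal attempts the general case. Where your argument overlaps with the paper's (the inadmissible case), it is largely correct; where it goes beyond, it has a genuine gap that you yourself flag.

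Concretely: your elimination of the general type case via $c_\G^H(L_\Pi)=c_\G^W(L_\Pi)$ is the same as the paper's use of Proposition~1.5 of \cite{W}. Your elimination of the non-holomorphic Saito--Kurokawa case via \cite{Sc} is fine. However, your treatment of Klingen/Borel CAP is too quick: you assert that such a $\th$-lift from $\GO(L_\A)$ is ``non-tempered at the archimedean place, in particular not a discrete series''. The paper's argument is more careful and splits on whether $L$ is real or imaginary. In the real case it uses a Fourier-coefficient argument to show the lift is neither holomorphic nor anti-holomorphic; in the imaginary case it invokes Kashiwara--Vergne \cite{KV} to show the Blattner parameter is $(c+1,1)$ or $(c+2,2)$, which \emph{is} a discrete series but never of the form $(a+b+3,a+b+3)$. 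Your blanket ``non-tempered'' claim is not justified and should be replaced by this case analysis.

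The real issue is the weak endoscopic case. The paper disposes of it in one line using the inadmissibility hypothesis: if $\G$ is inadmissible, no holomorphic weak endoscopic lift (Yoshida lift) has a right $\G_\A$-invariant vector, so $\Pi$ cannot be such a lift. You instead try to prove it for arbitrary $\G$ by transferring a $\G_{v_0}$-fixed vector from the quaternionic local member $\th(\pi_{1,v_0}^{\D_{v_0}}\bt\pi_{2,v_0}^{\D_{v_0}})$ to the split local member $\th(\pi_{1,v_0}\bt\pi_{2,v_0})$, so that the generic global member $\Th(\pi_1\bt\pi_2)$ would contribute to $H^{a+2,b+1}(V_\G)$ and yield a contradiction. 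As you admit, this local transfer is ``a genuine statement in the local theta correspondence, not a formal consequence of admissibility'', and you do not prove it. That is exactly the content of the conjecture beyond the inadmissible case, so your argument is circular there: you are assuming what remains to be shown. (Your intermediate sentence ``if $\G$ were inadmissible \ldots\ contradicting $h^{a+b+3,0}(V_\G)\neq 0$'' is also phrased backwards; the contradiction is with $\Pi$ having a $\G_\A$-fixed vector, not with the Hodge number being nonzero.)

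In summary: restricted to inadmissible $\G$, your proof is essentially the paper's, modulo the sharper Klingen/Borel CAP analysis you should import from \cite{So} and \cite{KV}. For general $\G$, the weak endoscopic step is open, and your proposal does not close it.
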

This conjecture is true if $\G$ is inadmissible.
Indeed, according to Proposition 1.5 of Weissauer \cite{W}, $\Pi$ is concluded to 
be a CAP representation or a weak endoscopic lift.
Since $\G$ is $inadmissible$, $\Pi$ can not be a weak endoscopic lift by the above argument.
According to Theorem 4.1 of Soudry \cite{So}, every CAP representation associated to a Klingen or Borel parabolically induced representation is given by a $\th$-lift of an irreducible automorphic representation $\tau$ of ${\rm GO}(L_{\A})$ for a quadratic field $L$. 
In case that $L$ is a real quadratic field, every automorphic form $f$ of the $\th$-lift has a nonzero Fourier coefficient associated to $T = {}^tT$ with $\det T \in -d_L (\Q^{\t})^2$, where $d_L$ is the discriminant of $L$ and positive.
Hence $f$ is neither holomorphic nor anti-holomorphic.
Therefore this CAP representation cannot contribute to the $h^{a+b+3,0}, h^{0,a+b+3}$-parts.
In case that $L$ is an imaginary quadratic field, by Theorem 6.13, 7.2 of Kashiwara, Vergne \cite{KV}, the Blattner parameter of the CAP representation associated to $\tau$ is $(c+1,1)$ or $(c+2,2)$ if the weight of $\tau|_{\GSO(L)}$ (identified with a gr\"o{\ss}encharacter of $L$) is $c$.
Hence this CAP representation does not contribute to the $h^{a+b+3,0}$-part.
Thus, $\Pi$ is a holomorphic Saito-Kurokwa representation.

\end{document}